\newtheorem{theorem}{Theorem}
\newtheorem{definition}{Definition}
\newcommand{\C}{{\mathbb{C}}}
\newcommand{\N}{\mathbb{N}}
\renewcommand{\P}{{\mathbb{P}}}
\newcommand{\R}{{\mathbb{R}}}
\newcommand{\bbf}{\mathrm{\textbf{f}}}
\newcommand{\ddc}{\mathrm{{\rm dd^c}}}
\newcommand{\bbF}{\mathrm{\textbf{F}}}
\newtheorem{corollary}[equation]{Corollary}
\newtheorem{lemma}[equation]{Lemma}
\begin{document}

\title[General form of second main theorem on generalized $p$-Parabolic manifolds]{General form of second main theorem on generalized $p$-Parabolic manifolds for arbitrary closed subschemes}

\author{\fnm{Duc Quang} \sur{Si}} \email{quangsd@hnue.edu.vn}
\affil{\orgdiv{Department of Mathematics}, \orgname{Hanoi National University of Education}\\ \orgaddress{\street{136-Xuan Thuy, Cau Giay}, \city{Hanoi}, \country{Vietnam}}}


\abstract{By introducing the notion of distributive constant for a family of closed subschemes, we establish a general form of the second main theorem for algebraic non-degenerate meromorphic mappings from a generalized $p$-Parabolic manifold into a projective variety with arbitrary families of closed subschemes. As its consequence, we give a second main theorem for such meromorphic mappings intersecting arbitrary hypersurfaces with an explicitly truncation level for the counting functions. }

\keywords{Nevanlinna theory, Parabolic manifold, second main theorem, meromorphic mappings, hypersurface, closed subscheme}


\pacs[MSC Classification]{32H30, 32A22, 30D35}

\maketitle

\section{Introduction}

\noindent
In 1933, H. Cartan \cite{Ca} established the second main theorem for linearly non-degenerate holomorphic curves from $\C$ into $\P^n(\C)$ intersecting fixed hyperplanes in general position. In \cite{St77,St81} W. Stoll initially investigated the second main theorem for the case of meromorphic mappings from parabolic manifolds into projective spaces with hyperplanes in general position. Recently, motivated by the work of Ru \cite{Ru09} for holomorphic curves from $\C$ into projective varieties, Q. Han \cite{Han} has established a second main theorem for algebraically no-degenerate meromorphic mappings from a generalized $p$-Parabolic manifold into a projective variety with hypersurfaces in general position. In order to state the result of Han, we recall the following notions from \cite{Han}.

Let $(M, \omega)$ be a connected K\"{a}hler manifold of dimension $m$ with the K\"{a}hler form $\omega$. Let $p$ be an integer, $1\le p\le m$. Assume that there is a plurisubharmonic function $\phi$ with
\begin{itemize}
\item[(i) ] $\{\phi=-\infty\}$ is a closed subset of $M$ with strictly lower dimension,
\item[(ii)] $\phi$ is smooth on the open dense set $M \backslash\{\phi=-\infty\}$, with $\ddc \phi \geqslant 0$, such that
$$\left(\ddc \phi\right)^{p-1} \wedge \omega^{m-p} \not \equiv 0 \quad \text { and } \quad\left(\ddc \phi\right)^{p} \wedge \omega^{m-p} \equiv 0.$$
\end{itemize}
We define
$$\tau:=\mathrm{e}^{\phi} \quad \text { and } \quad \sigma:=d^{c} \phi \wedge(\ddc\phi)^{p-1} \wedge \omega^{m-p}.$$
The triple $(M,\tau,\omega)$, or just simple $M$, is said to be a generalized $p$-Parabolic manifold. The function $\tau$ is non-negative and is called a $p$-Parabolic exhaustion on $M$. We have:
\begin{align}
\label{1.1}
&(\ddc\tau)^p\wedge\omega^{m-p}\not\equiv 0,\ \ \ \ d\sigma=(\ddc\phi)^p\wedge\omega^{m-p}\equiv 0,\\ 
\label{1.2}
& (\ddc\tau)^j=\tau^j\left( (\ddc\phi)^j+jd\phi\wedge d^c\phi\wedge (\ddc\phi)^{j-1}\right)\text{ for } j=1,2,\ldots,p. 
\end{align}
We set $\Omega:=(\ddc\tau)^p\wedge\omega^{m-p}$, which is a volume form on $M$.

Let $(M,\tau,\omega)$ be a generalized $p$-Parabolic manifold. For $r\ge 0$ and $A\subset M$, define
\begin{align*}
M[r]&=\{x\in A|\ \tau (x)\le r^2\}, &M(r)=\{x\in M|\ \tau (x)< r^2\},\\ 
M\langle r\rangle &=\{x\in M|\ \tau (x)= r^2\}, &M_*=\{x\in M|\ \tau(x)>0\}.
\end{align*}
From Stokes's formula and (\ref{1.1}), for any $r>0$ it follows immediately that
\begin{align}\label{1.3}
\int_{M\langle r\rangle} \sigma=\kappa,
\end{align}
where $\kappa$ is a constant dependent only upon the structure of $M$. Also, by the Green-Jensen formula on generalized $p$-Parabolic manifolds (see \cite[Theorem 1.3]{WW}), if $\gamma$ is a plurisubharmonic function on $M$, then for every $r>s>0$, we have
$$\int_{s}^{r} \frac{\mathrm{d} t}{t^{2 p-1}} \int_{M[t]} \ddc[\gamma] \wedge(\ddc \tau)^{p-1} \wedge \omega^{m-p}=\frac{1}{2} \int_{M\langle r\rangle} \gamma \sigma-\frac{1}{2} \int_{M\langle s\rangle}\gamma\sigma,$$
where the operation $\ddc[\gamma]$ is taken in the sense of currents.

Throughout this paper, we fix a homogeneous coordinate system $(x_0:\cdots :x_N)$ of $\P^N(\C)$. Let $f$ be a meromorphic mapping from $M$ into $\P^N(\C)$. For each $x\in M$, there exist an open neighborhood $U$ of $x$ and a locally reduced representation $\bbf =(\bbf_0,\ldots ,\bbf_N): U\rightarrow\C^{N+1}$ of $f$ on $U$. If there is another local representation $\tilde\bbf$ of $f$ on a subdomain $V$ of $M$, then there is a nowhere zero holomorphic function $g_{U,V}$ such that
$$\bbf =g_{U,V}\tilde\bbf\ \text{ on }U\cap V.$$
Let $B$ be a holomorphic form of bi-degree $(m-1,0)$ on $M$. Take a chart $z=(z_1,\ldots ,z_m)$ with $U_z\cap U\ne\emptyset$. The $B$-derivative $\bbf'=\bbf'_{B,z}$ for $z$ is defined on $U_z\cap U$ by
$$ d\bbf\wedge B=\bbf' dz_1\wedge\cdots\wedge dz_m. $$
The $k$-th $B$-derivative $\bbf^{(k)}$ of $\bbf$ is defined by: $d\bbf^{(k-1)}\wedge B=\bbf^{(k)}dz_1\wedge\cdots\wedge d_{z_m}$. Put $\bbf^{(0)}=\bbf$ and consider
$$ \bbf_k=\bbf\wedge\bbf'\wedge\cdots\wedge\bbf^{(k)}: U\rightarrow\wedge^{k+1}\C^{N+1}. $$
The generated $k$-th associated map $f_k$ of $f$ is defined by 
$$f_{k}:=[\bbf_{k}]: M \rightarrow \P\left(\wedge^{k+1} \mathbb{C}^{n+1}\right)=\P^{n_{k}}(\C) \text{ for }n_{k}=\binom{N+1}{k+1}.$$
The following general conditions are proposed by Han in \cite{Han}.
\begin{itemize}
\item[($\mathcal{A}_{1}$)] $(M, \tau, \omega)$ denotes a generalized $p$-Parabolic manifold which possesses a globally defined meromorphic $(m-1,0)$-form $B$ such that, for any linearly non-degenerate meromorphic map $f: M \rightarrow \P^N(\C)$, the $k$-th associated map $f_{k}$ is well defined for $k=0,1, \ldots, n$, where we set $f_0:=f$ and where $f_N$ is constant.
\item[($\mathcal{A}_{2}$)] There exists a Hermitian holomorphic line bundle $(\mathfrak{L},\mathfrak{h})$ that admits a holomorphic section $\mu$ such that, for some increasing function $\mathcal Y(\tau)$, we have
\begin{align}\label{1.4}
m i_{m-1}|\mu|_{\mathfrak{h}}^{2} B \wedge\bar B \le\mathcal Y(\tau)(\ddc \tau)^{p-1}\wedge \omega^{m-p},
\end{align}
where we write the index
$$i_{m-1}=\left(\frac{i}{2\pi}\right)^{m-1}(m-1)!(-1)^{(m-1)(m-2)/2}.$$
\end{itemize}

In this paper, we do not distinguish each hypersurface of $\P^N(\C)$ with its defining homogeneous polynomial. Let $Q$ be a hypersurface of degree $d$ in $\P^N(\C)$ defined by
$$ Q({\bf x})=\sum_{I \in \mathcal{T}_{d}}a_{I}{\bf x}^{I},$$
where $\mathcal{T}_{d}:=\{(i_0,\ldots, i_N)\in\N_0^{N+1}: i_0+\cdots+i_N=d\}$, ${\bf x}^{I}=x_0^{i_0}\ldots x_N^{i_N}$ for $I=(i_0,\ldots,i_N)\in\mathcal T_d$,  $a_I\ (I\in\mathcal T_d)$ are constants, not all zero. 
Let $f: M \rightarrow \P^N(\C)$ be a meromorphic mapping such that $f(M)\not\subset Q$. The Weil function of $f$ with respect to $Q$ is defined (locally) by
$$\lambda_{Q}(f)=\log\frac{\|\bbf\|^{d}\cdot\|Q\|}{|Q(\bbf)|},$$
where $\bbf=(\bbf_0,\ldots,\bbf_N)$ is a reduced representation $f$ on a local holomorphic coordinate chart $(z,U_{z})$, $\|\bbf\|=(\sum_{i=0}^N\|\bbf_i\|^{2})^{1/2}$ and $\|Q\|=(\sum_{I}\|a_I\|^{2})^{1/2}$. We note that the above definition does not depend on the choice of the reduced representations and then is well-defined. The proximity function and the truncated (to level $k\ge 1$) counting function of $f$ with respect to $Q$ are defined, respectively, by:
\begin{align*}
m_{f}(r,Q)&:=\int_{M\langle r\rangle}\lambda_{Q}(f)\sigma,\\ 
N^{[k]}_{f}(r,s;Q)&:=\int_{s}^{r}\frac{dt}{t^{2p-1}}\int_{M[t]}[\theta_{f}^{k,Q}]\wedge\left(\ddc\tau\right)^{p-1}\wedge\omega^{m-p}, 
\end{align*}
where $[\theta_{f}^{k,Q}]$ denotes the current generated by the truncated divisor $\theta_{f}^{k,Q}$, which is defined locally by $\theta_{f}^{k,Q}(z):=\min\{k,{\rm div}_{Q(\bbf)}(z)\}$ for a local reduced representation $\bbf$ of $f$ on a local chart $(z,U_z)$. If $k=+\infty$, we remove the character $k$ from $\theta_{f}^{k,Q}$ and the character $[k]$ from $N^{[k]}_{f}(r,s;Q)$.

Let $\Omega_{FS}$ be the Fubini-Study metric on $\P^N(\C)$. The characteristic function of $f$, for a fixed $s>0$ and any $r>s$, defined by
$$T_{f}(r,s)=\int_{s}^{r}\frac{dt}{t^{2p-1}}\int_{M[t]} f^*\Omega_{FS}\wedge(\ddc\tau)^{p-1}\wedge\omega^{m-p}.$$
The First Main Theorem is stated as follows:
$$dT_{f}(r,s)=N_{f}(r,s;Q)+m_{f}(r,Q)-m_{f}(s,Q).$$

For each $1\le k \le N-1$, we define (locally) the following auxiliary function
$$\Psi_{k}=\frac{mi_{m-1}f_{k}^*\Omega_{FS}^{k}\wedge B\wedge\bar{B}}{(\ddc\tau)^{p}\wedge \omega^{m-p}}=\frac{\|\bbf_{k-1}\|^2\cdot\|\bbf_{k+1}\|^{2}}{\|\bbf_{k}\|^{4}}\frac{1}{A_{p}},$$
where $\Omega_{FS}^{k}$ is the Fubini-Study metric on $\P\left(\wedge^{k+1} \C^{N+1}\right)$, and $A_{p},1\le p\le m$, is the $p$-th symmetric polynomial of the matrix $(\tau_{a \bar{b}})$ with respect to the K\"{a}hler metric $\omega$. 
We set
\begin{align}\label{1.5}
{\rm Ric}_{p}(r,s):=\int_{s}^{r} \frac{d t}{t^{2 p-1}} \int_{M[t]} \theta_{A_{p}}^0 \wedge\left(\ddc \tau\right)^{p-1} \wedge \omega^{m-p},
\end{align}
where $\theta_{A_{p}}^0$ is the divisor of zeros of the holomorphic function $A_{p}$, and
\begin{align}\label{1.6}
m(\mathfrak{L};r,s):=\frac{1}{2} \int_{M\langle r\rangle} \log \frac{1}{|\mu|_{\mathfrak{h}}^{2}}\sigma-\frac{1}{2} \int_{M\langle s\rangle} \log \frac{1}{|\mu|_{\mathfrak{h}}^{2}}\sigma.
\end{align}

With the above notation, the second main theorem of Han in \cite{Han} is stated as follows.

\noindent
{\bf Theorem A.}\ {\it Let $f: M \rightarrow V \subset\P^N(\C)$ be an algebraically non-degenerate meromorphic mapping from a generalized $p$-Parabolic manifold $M$ that satisfies the general conditions $(\mathcal{A}_{1})$ and $(\mathcal{A}_{2})$ into a smooth projective algebraic variety $V$ of dimension $n\ge 1$, and let $Q_{1}, Q_{2}, \ldots, Q_{q}$ be $q\ (>n)$ hypersurfaces of $\P^N(\C)$ in general position with respect to $V$. Then, for any $\epsilon>0$ and $r>s>0$, we have
\begin{align*}
\|\ \sum_{j=1}^{q}\frac{1}{\deg Q_j}m_{f}(r;Q_j) \le &(n+1+\epsilon)T_{f}(r, s)\\
&+c\left(m_0(\mathcal{L};r,s)+{\rm Ric}_{p}(r,s)+\kappa \log^{+}\mathcal Y\left(r^{2}\right)+\kappa \log^+r\right),
\end{align*}
where $c\ge 1$ is a positive constant.}

\noindent
Here the notation ``$\| P$'' means the assertion $P$ hold for all $r\in (0;+\infty)$ outside a set of finite Lebesgue measure.

Our aim in this paper is to generalize Theorem A to the case where ``families of hypersurfaces in general position'' is replaced by ``families of arbitrary closed subschemes''. In order to state our result, we first recall the notion of the Weil function of a closed subscheme on a  projective variety due to K. Yamanoi \cite{Y}. 

Let $V$ be a projective variety defined over $\C$. Then the closed subschemes of $V$ are in one-to-one correspondence with their ideal sheaves of $\mathcal{O}_{V}$. 
A Weil function $\lambda_{Z}$ for a closed subscheme $Z\subset V$ is a continuous function $\lambda_{Z}: V\setminus {\rm Supp}Z \rightarrow\R$ satisfying the condition that: there exist an affine Zariski open covering $\{U_{\alpha}\}_{\alpha\in\Lambda}$ of $V$ and a system of generators $f_1^{\alpha},\ldots, f_{r_{\alpha}}^{\alpha}\in\Gamma (U_{\alpha}, \mathcal{O}_{U_{\alpha}})$ of the ideal $\Gamma(U_{\alpha},\mathcal I_Z)\subset$ $\Gamma\left(U_{\alpha}, \mathcal{O}_{U_{\alpha}}\right)$ such that
$$\lambda_{Z}(x)\equiv -\log \max _{1\le i\le r_{\alpha}}\left|f_{i}^{\alpha}(x)\right|+c(x),$$
where $c(x)$ is a continuous function on $U_\alpha$. We list here some fundamental properties of the Weil function:

(1) Let $D \subset V$ be an effective Cartier divisor on $V$. We consider $D$ as a closed subscheme and define $\lambda_{D}$ as above. More generally, if $D$ is a $\mathbb{Q}$-Cartier divisor and $nD$ is a Cartier divisor, then we define $\lambda_{D}$ to be $\frac{1}{n}\lambda_{nD}$.

(2) Let $D \subset V$ be an effective divisor on $V$ and take a Hermitian metric $\|\cdot\|$ for the corresponding line bundle $\mathcal{O}(D)$. Let $\sigma$ be a canonical section of $\mathcal{O}(D)$ uniquely determined up to a constant multiple so that ${\rm div}\sigma=D$. Then the function $\lambda_{D}(P)=-\log \|\sigma(P)\|$ is a Weil function for $D$. In particular, if $V\subset\P^N(\C)$, $Q$ is a hypersurface of $\P^N(\C)$ and $f:M\rightarrow V$ is a meromorphic mapping then the composition function $\lambda_{Q\cap V}(f)$ exactly coincides with $\lambda_Q(f)$ in the above.

(3) Let $Y$ be a closed subscheme of $V$. Then, there exist effective Cartier divisors $D_{1}, \ldots, D_{r}$ such that $Y=\bigcap_{i=1}^{r} D_{i}.$ The (local) Weil function for $Y$ is defined by
$$\lambda_{Y}=\min _{1\le i\le r}\left\{\lambda_{D_{i}}\right\}.$$

We have the definition of the Seshadri constant of a closed subscheme with respect to a nef divisor as follows.

\noindent
{\bf Definition B.} {\it Let $Y$ be a closed subscheme of a projective variety $X$ and let $\pi: \tilde{X} \rightarrow X$ be the blowing-up of $X$ along $Y$. Let $A$ be a nef Cartier divisor on $X$. The Seshadri constant $\epsilon_{Y}(A)$ of $Y$ with respect to $A$ is defined by
$$\epsilon_{Y}(A)=\sup \left\{\gamma \in \mathbb{Q}^{\geq 0} \mid \pi^* A-\gamma E \text { is } \mathbb{Q} \text {-nef }\right\},$$
where $E$ is an effective Cartier divisor on $\tilde{X}$ whose associated invertible sheaf is the dual of $\pi^{-1} \mathcal I_{Y} \cdot \mathcal{O}_{\hat{X}}$.}

We give the following definition of the notion of distributive constant of a family of closed subschemes with respect to a projective variety, which is a generalization of the notion of distributive constant of a family of hypersurfaces introduced in \cite{Qpcf}.
\begin{definition}\label{1.7}
The distributive constant of a families of closed subschemes $\mathcal Y=\{Y_{1}, \ldots, Y_{q}\}$ on a projective variety $V$ with respect to a subvariety $X\subset V$ is defined by
$$ \delta=\max_{J\subset\{1,\ldots,q\}}\max\left\{1;\frac{\sharp J}{\dim X-\dim X\cap\bigcap_{j\in J}Y_j}\right\},$$
where $\dim\emptyset=-\infty$.
\end{definition}

Our main result is stated as follows:

\begin{theorem}\label{1.8}
Let $V$ be a complex projective variety of dimension $n \geq 1$ defined over $\C$. Let $f$ be an algebraically nondegenerate meromorphic mapping from a generalized $p$-Parabolic manifold $(M,\tau,\omega)$, which satisfies the conditions $(\mathcal{A}_{1})$ and $(\mathcal{A}_{2})$, into $V$. Let $Y_{1}, \ldots, Y_{q}$ be $q$ closed subschemes in $V$ and $\delta\ge 1$ be a positive number. Suppose that there exists an ample Cartier divisor $A$ on $V$. Then, for any $\epsilon>0$ and $r>s>0$, we have
\begin{align*}
\| \int_{M\langle r\rangle}&\max _{K \in \mathcal{K}} \sum_{j \in K}\epsilon_{Y_j}(A) \lambda_{Y_{j}}(f) \sigma \le(\delta(n+1)+\epsilon) T_{f,A}(r,s)\\
&+c\left(m_0(\mathfrak{L};r,s)+{\rm Ric}_{p}(r,s)+\kappa \log ^+ \mathcal{Y}\left(r^{2}\right)+k \log ^+ r\right),
\end{align*}
where $\mathcal{K}$ is the set of all subsets $K \subset\{1, \ldots, q\}$ so that the distributive constant of $\{Y_j;j\in K\}$ with respect to $V$ does not exceed $\delta$ and $c \ge 1$ is a positive constant.
\end{theorem}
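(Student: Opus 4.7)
The plan is to reduce Theorem \ref{1.8} to the Cartan-type second main theorem of Han (Theorem A applied to a suitable linearly non-degenerate lift of $f$) via two reductions: first, from arbitrary closed subschemes $Y_j$ to effective Cartier divisors in a single linear system $|NA|$ through the Seshadri constants, and second, from these divisors to hyperplanes in a projective space through a filtration argument whose combinatorial constant comes out to $\delta(n+1)$ thanks to the distributive-constant hypothesis.

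\emph{Seshadri reduction.} Fix rationals $\epsilon'_j<\epsilon_{Y_j}(A)$ with $\sum_j(\epsilon_{Y_j}(A)-\epsilon'_j)$ so small that the induced loss is absorbed into the $\epsilon T_{f,A}$ term. By the definition of $\epsilon_{Y_j}(A)$ applied to the blow-up $\pi:\tilde V\to V$ along $Y_j$, together with standard global generation of $N(\pi^*A-\gamma E)+(\text{small ample})$, one obtains a common integer $N$ and, for every $y\in V$, a section in $H^0(V,NA)$ vanishing on $Y_j$ to order at least $N\epsilon'_j$ but nonzero at $y$. A compactness argument extracts a finite collection $\{s_{j,\ell}\}_{\ell=1}^{M_j}\subset H^0(V,NA)$ with no common zero on $V$, all vanishing on $Y_j$ to order $\ge N\epsilon'_j$. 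This yields the pointwise bound
$$\epsilon'_j\,\lambda_{Y_j}(f(x))\le\tfrac{1}{N}\min_{1\le\ell\le M_j}\lambda_{\mathrm{div}(s_{j,\ell})}(f(x))+O(1),$$
replacing each subscheme Weil function by a finite set of divisor Weil functions on $V$.

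\emph{Filtration and Cartan step.} For each $K\in\mathcal{K}$ and each choice of indices $\ell(j)\in\{1,\ldots,M_j\}$ for $j\in K$, set $D_j:=\mathrm{div}(s_{j,\ell(j)})\cap V$. Since each $s_{j,\ell(j)}$ vanishes on $Y_j$, we have $V\cap Y_j\subset D_j$, so the distributive constant of $\{D_j\}_{j\in K}$ with respect to $V$ is bounded above by $\delta$ as well. Now carry out the Corvaja-Zannier-Evertse-Ferretti filtration on $H^0(V,mNA)$ for large $m$, refined by the distributive-constant combinatorial lemma of Quang: one obtains a basis $\{\phi_0,\ldots,\phi_T\}$ of $H^0(V,mNA)$, with $T+1=h_m:=\dim H^0(V,mNA)$, and a pointwise inequality of the form
$$\sum_{j\in K}\epsilon'_j\,\lambda_{Y_j}(f(x))\le\frac{\delta(n+1)+\tfrac{\epsilon}{2}}{h_m\,N}\max_{J_0}\sum_{t\in J_0}\lambda_{\phi_t}(f(x))+O(1),$$
where $J_0$ runs over $h_m$-subsets of $\{0,\ldots,T\}$. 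Applying Han's Theorem A (Cartan case) to the linearly non-degenerate holomorphic map $F:=[\phi_0\circ\bbf:\cdots:\phi_T\circ\bbf]$ together with the coordinate hyperplanes, and using $T_F(r,s)=mN\,T_{f,A}(r,s)+O(1)$ from the First Main Theorem, gives the target inequality after integrating, dividing through, letting $m\to\infty$, and finally $\epsilon'_j\to\epsilon_{Y_j}(A)$.

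The principal obstacle is the joint treatment of the two pointwise extrema: the outer $\max_{K\in\mathcal{K}}$ from the statement and the inner $\min_\ell$ coming from the Seshadri reduction, both of which depend on $x\in M\langle r\rangle$. Because $\mathcal{K}$ and each $\{s_{j,\ell}\}$ are finite, $M\langle r\rangle$ decomposes into finitely many measurable pieces on each of which a fixed $K$ together with a fixed tuple $(\ell(j))_{j\in K}$ is optimal; on each piece one applies the Cartan-Ru estimate for the corresponding fixed divisors and then sums contributions. The most delicate bookkeeping is verifying that Quang's combinatorial lemma, built from the cumulative-codimension formula in Definition \ref{1.7}, really produces the sharp constant $\delta(n+1)$ rather than a larger multiple; this forces an inductive construction of the filtration in which, at each step, the sub-family of $D_j$ selected to refine the current flag achieves the maximum in the definition of $\delta$.
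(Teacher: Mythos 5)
Your overall architecture (Seshadri reduction from subschemes to divisors in $|NA|$, then a filtration/Chow-weight argument reducing to Han's Cartan-type theorem) matches the paper's, but there is a genuine gap at the single most delicate point: the transfer of the distributive-constant bound from $\{Y_j\}_{j\in K}$ to the replacement divisors $\{D_j\}_{j\in K}$. You assert that since each $s_{j,\ell(j)}$ vanishes on $Y_j$ we have $Y_j\subset D_j$, ``so the distributive constant of $\{D_j\}_{j\in K}$ is bounded above by $\delta$ as well.'' This implication goes the wrong way. By Definition \ref{1.7}, the distributive constant involves $\sharp J/(\dim V-\dim V\cap\bigcap_{j\in J}\cdot)$, and the inclusions $Y_j\subset D_j$ give $\bigcap_{j\in J}Y_j\subset\bigcap_{j\in J}D_j$, hence $\dim\bigcap_{j\in J}D_j\ge\dim\bigcap_{j\in J}Y_j$; the denominators can only shrink, so the distributive constant of the $D_j$ can only \emph{increase}. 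Arbitrary sections vanishing on the $Y_j$ can therefore destroy the constant $\delta(n+1)$ entirely, and nothing in your construction (which only demands no common zeros and the correct vanishing order along $Y_j$) rules this out.

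The paper closes exactly this gap with an inductive construction: the divisors $F_{0,K},\dots,F_{q_K,K}$ are built one at a time, with $s_{j,K}$ chosen in $H^0\bigl(\tilde V_{j,K},N(1+\gamma)\pi^*A-N(\epsilon_j+\gamma')E_j\bigr)$ so as not to vanish identically on any irreducible component of any partial intersection $\bigcap_{i\in I}\tilde F_{i,K}\cap\bigcap_{i\in J}\tilde Y_{i,K}$ formed from the divisors already constructed and the subschemes not yet replaced. The dimension count then splits into two cases for a maximal component $\Gamma$ of the new intersection --- $\Gamma\subset Y_{j,K}$ (controlled by the previous family, which still contains $Y_{j,K}$) versus $\Gamma\not\subset Y_{j,K}$ (where genericity of $s_{j,K}$ forces the dimension to drop by one) --- and this is what shows the distributive constant does not increase at each step. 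You would need to incorporate this genericity-plus-induction argument (or an equivalent general-position statement for your collections $\{s_{j,\ell}\}$, uniform over all tuples $(\ell(j))_{j\in K}$ and all $K\in\mathcal K$) before the filtration step can legitimately invoke the constant $\delta(n+1)$. The remaining parts of your outline --- the Seshadri constant approximation by rationals $\epsilon_j'$, the use of Lemma \ref{2.4} to compare $\lambda_{Y_j}$ with $\lambda_{E_j}$, the decomposition of $M\langle r\rangle$ into finitely many pieces to handle the pointwise maxima, and the passage through a linearly non-degenerate lift $F$ --- are consistent with the paper's route (which uses Chow/Hilbert weights via Theorems \ref{2.1} and \ref{2.2} rather than a Corvaja--Zannier filtration, a cosmetic difference).
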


By usual arguments, from Theorem \ref{1.8}, we obtain the following second main theorem.

\begin{corollary}\label{1.9}
Let $V$ be a complex projective variety of dimension $n \geq 1$ defined over $\C$. Let $f$ be an algebraically nondegenerate meromorphic mapping from a generalized $p$-Parabolic manifold $(M,\tau,\omega)$, which satisfies the conditions $(\mathcal{A}_{1})$ and $(\mathcal{A}_{2})$, into $V$. Let $\{Y_{1}, \ldots, Y_{q}\}$ be a family of $q$ closed subschemes in $V$ with the distributive constant with respect to $V$ equal to $\delta$. Suppose that there exists an ample Cartier divisor $A$ on $V$. Then, for any $\epsilon>0$ and $r>s>0$, we have
\begin{align*}
\| \sum_{i=1}^q\epsilon_{Y_j}(A)&m_f(r,Y_j)\le(\delta(n+1)+\epsilon) T_{f,A}(r,s)\\
&+c\left(m_0(\mathfrak{L};r,s)+{\rm Ric}_{p}(r,s)+\kappa \log ^+ \mathcal{Y}\left(r^{2}\right)+k \log ^+ r\right)
\end{align*}
for some positive constant $c\ge 1$.
\end{corollary}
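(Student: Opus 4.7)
The plan is to deduce Corollary~\ref{1.9} from Theorem~\ref{1.8} by a direct pointwise-comparison argument, with no new analytic input required. The key structural remark is that, by hypothesis, the distributive constant of the full family $\{Y_1,\dots,Y_q\}$ with respect to $V$ equals $\delta$, so the whole index set $\{1,\dots,q\}$ itself lies in the collection $\mathcal{K}$ that appears in Theorem~\ref{1.8}. It therefore suffices to show that, under this hypothesis, the left-hand side of Theorem~\ref{1.8} reduces (up to absorbable constants) to $\sum_{j=1}^{q}\epsilon_{Y_j}(A)\,m_f(r,Y_j)$.

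Before running the pointwise comparison I would first normalise the Weil functions. Since $V$ is projective, each $\lambda_{Y_j}$ is continuous on $V\setminus\mathrm{Supp}(Y_j)$ and tends to $+\infty$ along $\mathrm{Supp}(Y_j)$, hence is bounded below on its domain. Because a Weil function is only determined up to a bounded continuous function, I may replace each $\lambda_{Y_j}$ by $\lambda_{Y_j}+C_j$ with $C_j$ large enough that $\lambda_{Y_j}\ge 0$ on $V\setminus\mathrm{Supp}(Y_j)$. This replacement modifies each $m_f(r,Y_j)$ by at most $C_j\kappa$, where $\kappa=\int_{M\langle r\rangle}\sigma$ is the constant from (\ref{1.3}); such bounded contributions are harmlessly absorbed into the error term on the right-hand side.

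With every $\lambda_{Y_j}(f)$ non-negative, the sum $\sum_{j\in K}\epsilon_{Y_j}(A)\lambda_{Y_j}(f)$ is monotone non-decreasing in $K\subset\{1,\dots,q\}$, so its maximum over $\mathcal{K}$ is attained at the full set $K=\{1,\dots,q\}$, giving the pointwise identity
$$\max_{K\in\mathcal{K}}\sum_{j\in K}\epsilon_{Y_j}(A)\lambda_{Y_j}(f)=\sum_{j=1}^{q}\epsilon_{Y_j}(A)\lambda_{Y_j}(f).$$
Integrating this identity against $\sigma$ on $M\langle r\rangle$ and using $m_f(r,Y_j)=\int_{M\langle r\rangle}\lambda_{Y_j}(f)\sigma$ identifies the left-hand side of Theorem~\ref{1.8} with $\sum_{j=1}^{q}\epsilon_{Y_j}(A)\,m_f(r,Y_j)$, and the stated inequality follows at once. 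I expect no substantive obstacle: the derivation is a formal reduction, and the only delicate bookkeeping is to verify that the additive constants coming from the normalisation of the $\lambda_{Y_j}$, multiplied by $\kappa$, are dominated by the $\kappa\log^{+}r$ term already present on the right-hand side of Theorem~\ref{1.8}.
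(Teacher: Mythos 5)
Your proposal is correct and follows exactly the route the paper intends: the paper derives Corollary~\ref{1.9} from Theorem~\ref{1.8} ``by usual arguments,'' and those arguments are precisely your observations that the full index set $\{1,\dots,q\}$ lies in $\mathcal{K}$ (since its distributive constant equals $\delta$) and that, after normalizing the Weil functions to be non-negative (a harmless $O(\kappa)$ adjustment), the maximum over $\mathcal{K}$ dominates $\sum_{j=1}^{q}\epsilon_{Y_j}(A)\lambda_{Y_j}(f)$, which integrates to $\sum_{j=1}^{q}\epsilon_{Y_j}(A)\,m_f(r,Y_j)$. No substantive difference from the paper's (implicit) proof.
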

By simple computation, we see that if $Y_{1},\ldots,Y_{q}$ are in $l$-subgeneral position with respect to $V$ then the distributive $\delta$ is bounded above by $(l-n+1)$, in particular $\delta=1$ if they are in general position with respect to $V$. Therefore, the above corollary is a generalization of Theorem A of Q. Han and the general form of second main theorem of W. Chen- N. V. Thin \cite{CT} for the case of meromorphic mappings on $p$-Parabolic manifolds intersecting family of hypersurfaces in subgeneral position. Our above corollary also is a generalization of a recent second main theorem of L. Wang, T. Cao and H. Cao \cite{WCC} for the case of families of closed subschemes in subgeneral position.

\section{Some auxiliary results}

Let $X$ be a projective subvariety of $\P^N(\C)$ of dimension $n$ and degree $\Delta_X$. The Chow form of $X$ is the unique polynomial, up to a constant scalar, 
$$F_X(\textbf{u}_0,\ldots,\textbf{u}_n) = F_X(u_{00},\ldots,u_{0N};\ldots; u_{n0},\ldots,u_{nN})$$
in $N+1$ blocks of variables $\textbf{u}_i=(u_{i0},\ldots,u_{iN}), i = 0,\ldots,n$ with the following
properties: 
\begin{itemize}
\item $F_X$ is irreducible in $\C[u_{00},\ldots,u_{nN}]$;
\item $F_X$ is homogeneous of degree $\delta$ in each block $\textbf{u}_i, i=0,\ldots,n$;
\item $F_X(\textbf{u}_0,\ldots,\textbf{u}_n) = 0$ if and only if $X\cap H_{\textbf{u}_0}\cap\cdots\cap H_{\textbf{u}_n}\ne\emptyset$, where $H_{\textbf{u}_i}, i = 0,\ldots,n$, are the hyperplanes given by $u_{i0}x_0+\cdots+ u_{iN}x_N=0.$
\end{itemize}
Let ${\bf c}=(c_0,\ldots, c_N)$ be a tuple of real numbers and $t$ be an auxiliary variable. We write
\begin{align*}
F_X(t^{c_0}u_{00},&\ldots,t^{c_N}u_{0N};\ldots ; t^{c_0}u_{n0},\ldots,t^{c_N}u_{nN})\\ 
& = t^{e_0}G_0(\textbf{u}_0,\ldots,\textbf{u}_N)+\cdots +t^{e_r}G_r(\textbf{u}_0,\ldots, \textbf{u}_N),
\end{align*}
with $G_0,\ldots,G_r\in K[u_{00},\ldots,u_{0N};\ldots; u_{n0},\ldots,u_{nN}]$ and $e_0>e_1>\cdots>e_r$. The Chow weight of $X$ with respect to ${\bf c}$ is defined by
\begin{align*}
e_X({\bf c}):=e_0.
\end{align*}
For $\textbf{u} = (u_0,\ldots,u_R),\textbf{v} = (v_0,\ldots,v_R)\in\mathbb R^{R+1}$, we define
$$ \textbf{u}\cdot \textbf{v}:=\sum_{i=1}^Ru_iv_i. $$
For $\textbf{a} = (a_0,\ldots,a_N)\in\mathbb Z^{N+1}$ we write ${\bf x}^{\bf a}$ for the monomial $x^{a_0}_0\cdots x^{a_N}_N$. Let $I=I_X$ be the prime ideal in $\C[x_0,\ldots,x_N]$ defining $X$. Let $\C[x_0,\ldots,x_N]_m$ denote the vector space of homogeneous polynomials in $\C[x_0,\ldots,x_N]$ of degree $m$ (including $0$). Put $I_m :=\ C[x_0,\ldots,x_N]_m\cap I$ and define the Hilbert function $H_X$ of $X$ by, for $m = 1, 2,...,$
$$H_X(m):=\dim (\C[x_0,...,x_N]_m/I_m).$$
The $m$-th Hilbert weight $S_X(m,{\bf c})$ of $X$ with respect to the tuple ${\bf c}=(c_0,\ldots,c_N)\in\mathbb R^{N+1}$ is defined by
$$S_X(m,{\bf c}):=\max\biggl (\sum_{i=1}^{H_X(m)}{\bf a}_i\cdot{\bf c}\biggl),$$
where the maximum is taken over all sets of monomials ${\bf x}^{{\bf a}_1},\ldots,{\bf x}^{{\bf a}_{H_X(m)}}$ whose residue classes modulo $I$ form a basis of $\C[x_0,\ldots,x_N]_m/I_m.$
\begin{theorem}[{\cite[Theorem 4.1]{EverFer2}, see also \cite[Theorem 2.1]{Ru09}}]\label{2.1}
Let $X\subset\P^N(\C)$ be an algebraic variety of dimension $n$ and degree $D$. Let $u>D$ be an integer and let ${\bf c}=(c_0,\ldots,c_N)\in\mathbb R^{N+1}_{\geqslant 0}$.
Then
$$ \frac{1}{uH_X(u)}S_X(u,{\bf c})\ge\frac{1}{(n+1)D}e_X({\bf c})-\frac{(2n+1)D}{u}\cdot\left (\max_{i=0,...,N}c_i\right). $$
\end{theorem}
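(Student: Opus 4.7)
The plan is to prove Theorem \ref{2.1} by the Mumford-type filtration argument of Evertse and Ferretti \cite{EverFer2}. As a first reduction, both $S_X(u,\mathbf{c})$ and $e_X(\mathbf{c})$ depend continuously on $\mathbf{c}\in\R_{\geq 0}^{N+1}$ and are positively homogeneous of degree one in $\mathbf{c}$. Hence by density and scaling I may assume $\mathbf{c}=(c_0,\ldots,c_N)\in\Z_{\geq 0}^{N+1}$, in which case the Hilbert weight $S_X(u,\mathbf{c})$ is realized by an explicit greedy choice of monomial residues.

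Next, set $R_u:=\C[x_0,\ldots,x_N]_u/I_u$, so that $\dim R_u=H_X(u)$. For each integer $t\geq 0$ define
\[
W_{u,t}:=\mathrm{span}_{\C}\{\mathbf{x}^{\mathbf{a}}+I_u:|\mathbf{a}|=u,\ \mathbf{a}\cdot\mathbf{c}\geq t\}\subset R_u,
\]
giving a decreasing filtration $R_u=W_{u,0}\supset W_{u,1}\supset\cdots\supset W_{u,T}=0$ whose top index satisfies $T\leq u\max_i c_i$. Picking a basis of $R_u$ by selecting residue classes of monomials in decreasing $\mathbf{c}$-weight order produces the identity
\[
S_X(u,\mathbf{c})=\sum_{t\geq 1}\dim W_{u,t},
\]
which is the combinatorial bridge between the Hilbert-weight definition (a maximum over bases) and the filtration.

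The Chow weight enters through Mumford's asymptotic characterization: the bigraded Rees algebra $\bigoplus_{u\geq 0,\,t\geq 0}W_{u,t}/W_{u,t+1}$ has Krull dimension $n+1$, and its Hilbert function has leading coefficient encoding $e_X(\mathbf{c})/(n+1)!$. Equivalently,
\[
\sum_{t\geq 1}\dim W_{u,t}=\frac{e_X(\mathbf{c})}{(n+1)!}u^{n+1}+o(u^{n+1}).
\]
To promote this asymptotic to the stated effective inequality I invoke a quantitative Hilbert-polynomial bound: since $X\subset\P^N(\C)$ has degree $D$, a Castelnuovo--Mumford regularity estimate yields $|H_X(u)-\tfrac{D}{n!}u^n|\leq C(n,D)u^{n-1}$ once $u>D$, with constants depending only on $n$ and $D$. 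Applied uniformly to the bigraded Rees algebra (rather than to each $t$ separately), this gives
\[
\sum_{t\geq 1}\dim W_{u,t}\ \geq\ \frac{e_X(\mathbf{c})}{(n+1)!}u^{n+1}-\frac{(2n+1)D}{n!}u^n\max_i c_i,
\]
and dividing through by $uH_X(u)=\tfrac{D}{n!}u^{n+1}+O(u^n)$ delivers the claimed inequality.

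The main obstacle will be keeping the error term linear in $\max_i c_i$ rather than quadratic. A naive Hilbert-polynomial estimate applied separately to each $R_u/W_{u,t}$ and summed over $t$ would accumulate $T\leq u\max_i c_i$ error terms of size $O(u^{n-1})$, producing a spurious factor of $(\max_i c_i)^2$. The technical heart of \cite{EverFer2} is to view $\sum_t\dim W_{u,t}$ as a single evaluation of the Hilbert function of the Rees algebra and apply regularity to that algebra only once, with the factor $(2n+1)$ then arising from the two Krull dimensions (one in $u$ and one in $t$) together with a boundary correction, and the factor $D$ coming from the leading coefficient of $H_X$. Tracking this single application of regularity carefully yields exactly the constant $(2n+1)D$ in the statement.
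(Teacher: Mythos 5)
The paper itself does not prove Theorem \ref{2.1}: it is imported as a black box from Evertse--Ferretti \cite{EverFer2} (see also \cite{Ru09}), so your attempt must be measured against the cited proof. Your first three steps are sound and do follow the classical route: the reduction to $\mathbf{c}\in\Z_{\ge 0}^{N+1}$ by homogeneity, continuity and density is legitimate; the identity $S_X(u,\mathbf{c})=\sum_{t\ge 1}\dim W_{u,t}$ is correct (the inequality $\le$ because the basis monomials of weight $\ge t$ have linearly independent residues lying in $W_{u,t}$, the inequality $\ge$ by the greedy/matroid selection you describe); and Mumford's asymptotic $S_X(u,\mathbf{c})=e_X(\mathbf{c})\,u^{n+1}/(n+1)!+o(u^{n+1})$ is indeed the qualitative bridge between Hilbert weight and Chow weight.

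The genuine gap is your step 4, which compresses the entire quantitative content of the theorem into the assertion that ``a Castelnuovo--Mumford regularity estimate yields $|H_X(u)-\frac{D}{n!}u^n|\le C(n,D)u^{n-1}$ once $u>D$,'' applied ``uniformly to the bigraded Rees algebra.'' No such estimate exists: by the McCullough--Peeva counterexamples to the Eisenbud--Goto conjecture, the regularity of an irreducible projective variety is not bounded by any polynomial function of its degree and dimension alone, so for $u$ just above $D$ the Hilbert function need not be close to the Hilbert polynomial; worse, the associated graded object you propose to apply this to is the coordinate ring of the weight degeneration of $X$, which is generally non-reduced and even less controllable in terms of $(n,D)$. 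Moreover, even granting your displayed lower bound for $\sum_{t\ge 1}\dim W_{u,t}$, dividing by $uH_X(u)$ does not ``deliver the claimed inequality'': to preserve the main term you would need $H_X(u)\le Du^n/n!$ and to preserve the error term you would need $H_X(u)\ge u^n/n!$, and neither holds in general (for $X$ a linear subspace, $H_X(u)=\binom{u+n}{n}>u^n/n!$), so additional error terms arise that must be absorbed using estimates such as $e_X(\mathbf{c})\le (n+1)D\max_i c_i$ and $H_X(u)\le D\binom{u+n}{n}$ --- exactly the bookkeeping from which the hypothesis $u>D$ and the constant $(2n+1)D$ actually emerge in \cite{EverFer2}. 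Evertse and Ferretti obtain the effective bound through explicit weight and Hilbert-function estimates of this kind rather than through any regularity bound; as written, your step 4 restates the theorem instead of proving it, and your closing paragraph correctly identifies the danger of a quadratic error in $\max_i c_i$ but offers no mechanism that avoids it.
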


\begin{theorem}[{see \cite[Theorem 3.2]{Qjnt}}]\label{2.2}
Let $V$ be a projective subvariety of $\P^N(\C)$ of dimension $n\ge 1$ and degree $\Delta_V$ and $(x_0:\cdots:x_N)$ be homogeneous coordinates system of $\P^N(\C)$. Let ${\bf c}=(c_0,\ldots,c_N)$ be a tuple of non-negative reals. Let $H_0,\ldots,H_N$ be hyperplanes in $\P^N(\C)$ defined by $H_{i}=\{y_{i}=0\}\ (0\le i\le N)$. Let $\{i_0,\ldots, i_m\}$ be a subset of $\{0,\ldots,N\}\ (m\ge n)$ such that:
\begin{itemize}
\item[(1)] $c_{i_m}=\min\{c_{i_0},\ldots,c_{i_m}\}$,
\item[(2)] $V\cap\bigcap_{j=0}^{m-1}H_{i_j}\ne\emptyset$, 
\item[(3)] and $V\not\subset H_{i_j}$ for all $j=0,\ldots,m$.
\end{itemize}
Let $\delta_V$ be the distributive constant of the family $\{H_{i_j}\}_{j=0}^m$ with respect to $V$. Then
$$e_V({\bf c})\ge \frac{\Delta_V}{\delta_V}(c_{i_0}+\cdots+c_{i_m}).$$
\end{theorem}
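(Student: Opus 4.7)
The plan is to convert the stated lower bound on $e_V({\bf c})$ into a combinatorial statement about a ``greedy intersection chain'' of the hyperplanes $H_{i_0},\ldots,H_{i_m}$ on $V$: I extract a suitable monomial from the Chow form $F_V$ whose $t$-weight accounts for the chain, then sum up by Abel summation using the defining inequality of the distributive constant $\delta_V$.

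First I reduce. Because $e_V({\bf c})$ is the maximum of $\sum_{a,b}\alpha_{ab}c_b$ over monomials $\prod_{a,b}u_{ab}^{\alpha_{ab}}$ occurring in $F_V$ (with $\alpha_{ab}\ge 0$), the Chow weight is monotone non-decreasing in each $c_b\ge 0$; so I may set $c_b=0$ for $b\notin\{i_0,\ldots,i_m\}$, which leaves the target sum unchanged while only decreasing the left-hand side. After relabelling coordinates I assume $\{i_0,\ldots,i_m\}=\{0,\ldots,m\}$, and using the freedom to permute the first $m$ indices (hypothesis (1) only requires $c_m$ to be the minimum), further arrange $c_0\ge c_1\ge\cdots\ge c_{m-1}\ge c_m\ge 0$.

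Next I build a greedy chain $V_{-1}:=V\supset V_0\supset\cdots\supset V_n$ by defining $j_k$ as the smallest index greater than $j_{k-1}$ such that $V_{k-1}\not\subset H_{j_k}$, and letting $V_k$ be an irreducible component of $V_{k-1}\cap H_{j_k}$ of dimension $\dim V_{k-1}-1$. Hypothesis (3) guarantees $j_0=0$, and a standard argument using the finiteness of $V_{n-1}$ allows $j_n\in\{0,\ldots,N\}$ to be chosen so that $V_n=\emptyset$ (the chain is padded with $j_n>m$ and $c_{j_n}=0$ only in the degenerate case $V\cap\bigcap_{j=0}^m H_j\ne\emptyset$). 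Writing $N(\ell):=|\{k:j_k\le\ell\}|$, every $H_j$ with $j\le\ell$ either lies in the chain or was skipped at an earlier step at which the then-current subvariety was already contained in $H_j$; in both cases $V_{N(\ell)-1}\subset H_j$, so
$$V_{N(\ell)-1}\subset V\cap\bigcap_{j=0}^{\ell}H_j.$$
The right-hand side has codimension at most $N(\ell)$ in $V$, and applying the definition of $\delta_V$ to $J=\{0,\ldots,\ell\}$ yields the key chain estimate
$$\ell+1\le \delta_V\cdot N(\ell)\qquad\text{for every }\ell\in\{0,\ldots,m\}.$$

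Since $V_n=\emptyset$, the defining property of the Chow form gives $F_V({\bf e}_{j_0},\ldots,{\bf e}_{j_n})\ne 0$, so the diagonal monomial $\prod_{a=0}^n u_{a,j_a}^{\Delta_V}$ occurs in $F_V$ with non-zero coefficient; its $t$-weight under the substitution $u_{ab}\mapsto t^{c_b}u_{ab}$ is $\Delta_V\sum_{k=0}^n c_{j_k}$, whence $e_V({\bf c})\ge \Delta_V\sum_{k=0}^n c_{j_k}$. Setting $c_{m+1}:=0$ and Abel-summing with the chain estimate gives
$$\sum_{k=0}^n c_{j_k}=\sum_{\ell=0}^{m}(c_\ell-c_{\ell+1})\,N(\ell)\ge \frac{1}{\delta_V}\sum_{\ell=0}^{m}(c_\ell-c_{\ell+1})(\ell+1)=\frac{1}{\delta_V}\sum_{j=0}^{m}c_j,$$
which yields the asserted bound $e_V({\bf c})\ge \frac{\Delta_V}{\delta_V}(c_{i_0}+c_{i_1}+\cdots+c_{i_m})$. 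The main obstacle will be the chain construction in the degenerate case $V\cap\bigcap_{j=0}^{m}H_j\ne\emptyset$, since then no $(n+1)$-subset of $\{0,\ldots,m\}$ alone empties $V$ and one must supply a coordinate hyperplane from outside $\{0,\ldots,m\}$; hypothesis (1) (that $c_{i_m}$ is the minimum) is exactly what makes this padding harmless, because the Abel summation over $\ell\le m$ does not see the padded step and the inequality $\ell+1\le\delta_V N(\ell)$ persists for the truncated chain. Verifying this bookkeeping, together with the subtlety that $V_k$ is only an irreducible component of $V_{k-1}\cap H_{j_k}$, constitutes the main technical work.
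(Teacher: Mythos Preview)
The paper does not give its own proof of this statement; it is quoted from \cite{Qjnt}. Your overall plan---extract a monomial from $F_V$ to bound $e_V(\mathbf{c})$ from below, then run an Abel summation controlled by the distributive constant---is the right shape, and your derivation of the chain estimate $\ell+1\le\delta_V N(\ell)$ via the inclusion $V_{N(\ell)-1}\subset V\cap\bigcap_{j\le\ell}H_j$ is correct. However, there is a genuine gap at the Chow-form step, and it is not the mere ``bookkeeping'' you describe at the end.

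The problematic inference is ``since $V_n=\emptyset$, the defining property of the Chow form gives $F_V(\mathbf{e}_{j_0},\ldots,\mathbf{e}_{j_n})\ne 0$.'' The vanishing criterion for $F_V$ reads $F_V(\mathbf{e}_{j_0},\ldots,\mathbf{e}_{j_n})=0\iff V\cap H_{j_0}\cap\cdots\cap H_{j_n}\ne\emptyset$, and this refers to the \emph{full} intersection of $V$ with the selected hyperplanes, not to the terminal term of your greedy chain. Because at each step you retain only one irreducible component $V_k\subset V_{k-1}\cap H_{j_k}$, the discarded components may survive all later intersections. A concrete failure: take $n=1$, let $V$ be a curve with $V\cap H_0$ containing at least two distinct points $p_1,p_2$, choose $V_0=\{p_1\}$, and let $H_1$ satisfy $p_1\notin H_1$ but $p_2\in H_1$; then $j_1=1$, $V_1=\emptyset$, yet $p_2\in V\cap H_0\cap H_1$, so $F_V(\mathbf{e}_0,\mathbf{e}_1)=0$ and the diagonal monomial $u_{0,0}^{\Delta_V}u_{1,1}^{\Delta_V}$ does \emph{not} occur in $F_V$. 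Replacing your greedy chain by the ``dimension-drop'' indices of the full running intersection $V\cap H_0\cap\cdots\cap H_\ell$ makes the chain estimate immediate (since then $N(\ell)$ literally equals the codimension), but now $V\cap H_{j_0}\cap\cdots\cap H_{j_n}$ again need not be empty, so the same obstruction reappears. In short, neither variant of the greedy selection simultaneously delivers both the chain estimate and the nonvanishing $F_V(\mathbf{e}_{j_0},\ldots,\mathbf{e}_{j_n})\ne 0$; the argument in \cite{Qjnt} gets the lower bound on $e_V(\mathbf{c})$ by a route that does not reduce to evaluating $F_V$ at a single tuple of basis vectors, and you will need such an alternative input here.
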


\begin{theorem}[{see \cite[Theorem 4.1]{Han}}]\label{2.3}
 Let $f: M \rightarrow \P^N(\C)$ be a linearly non-degenerate meromorphic map defined on a $p$-Parabolic manifold $M$ satisfying the general conditions $(\mathcal{A}_{1})$ and $(\mathcal{A}_{2})$, and let $H_1, \ldots, H_q$ be $q$ hyperplanes. Then, for $r>s>0$, we have
\begin{align*}
\bigl\| \int_{M\langle r\rangle}&\max _{K} \sum_{j \in K} \lambda_{H_{j}}(f) \sigma \le (N+1) T_{f}(r,s)-N_{ramf}(r,s) \\
&+\frac{N(N+1)}{2}\left(m_{o}(\mathfrak{L};r,s)+{\rm Ric}_{p}(r,s)+\kappa \log ^+ T_{f}(r,s)\right) \\
&+\frac{\kappa N(N+1)}{2}\left(\log ^+ m_{o}(\mathfrak{L};r,s)+\log ^+ \mathcal{Y}\left(r^{2}\right)+\log ^+ {\rm Ric}_{p}(r,s)+\log ^+ r\right),
\end{align*}
where maximum is taken over all subsets $K$ of $\{1, \ldots, q\}$ such that the generating linear forms of the hyperplanes in each set are linearly independent and $N_{ramf}(r,s)$ is the counting function of the ramification divisor ${\rm div}\bbf_N$.
\end{theorem}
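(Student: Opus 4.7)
The plan is to adapt Cartan's classical method to the generalized $p$-Parabolic setting. I would begin by fixing a reduced local representation $\bbf=(\bbf_0,\ldots,\bbf_N)$ of $f$ and, for each hyperplane $H_j$, a defining linear form $L_j$. For any admissible subset $K=\{j_0,\ldots,j_N\}\subset\{1,\ldots,q\}$ (one for which $L_{j_0},\ldots,L_{j_N}$ are linearly independent), the standard Cartan geometric inequality gives the pointwise bound
\[
\sum_{j\in K}\lambda_{H_j}(f)\;\le\;\log\frac{\|\bbf\|^{N+1}}{|\bbf_{N}|}+O(1),
\]
because the $B$-Wronskian of $L_{j_0}(\bbf),\ldots,L_{j_N}(\bbf)$ is a nonzero scalar multiple of $\bbf_N$, the section defining the $N$-th associated map furnished by $(\mathcal A_{1})$, whose divisor is by definition the ramification divisor $\mathrm{div}\,\bbf_N$.

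Taking the maximum over admissible $K$ and integrating against $\sigma$ on $M\langle r\rangle$ replaces the left-hand side by the quantity in Theorem~\ref{2.3}. For the right-hand side, I would decompose $\log(\|\bbf\|^{N+1}/|\bbf_{N}|)$ as an explicit linear combination of $\log\|\bbf\|$, $\log A_p$ and $\log\Psi_k$ for $1\le k\le N-1$, using the definition of $\Psi_k$ recalled before Theorem~\ref{1.8}. The $\log\|\bbf\|$ contribution then integrates to $(N+1)T_f(r,s)+O(1)$ via the First Main Theorem; the $\log A_p$ contribution integrates, through the Green--Jensen formula recalled after~(\ref{1.3}), to a bounded multiple of $\mathrm{Ric}_p(r,s)$ by~(\ref{1.5}); and the Poincar\'e--Lelong contribution of $\log|\bbf_N|$ produces $-N_{\mathrm{ram}\,f}(r,s)+O(1)$.

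The central analytic step, and the principal obstacle, is the logarithmic derivative lemma on generalized $p$-Parabolic manifolds: for each $1\le k\le N-1$,
\[
\int_{M\langle r\rangle}\log^{+}\Psi_{k}\,\sigma\;\le\;C\bigl(\log^{+}T_{f}(r,s)+m_{0}(\mathfrak{L};r,s)+\mathrm{Ric}_{p}(r,s)+\kappa\log^{+}\mathcal Y(r^{2})+\kappa\log^{+}r\bigr),
\]
off a set of finite Lebesgue measure in $r$. I would prove this by applying the Green--Jensen formula to the plurisubharmonic potential $\log(1+\Psi_{k})$ and then invoking condition~$(\mathcal A_{2})$: the inequality~(\ref{1.4}) dominates the defect $B\wedge\bar B$ by $(\ddc\tau)^{p-1}\wedge\omega^{m-p}$ at the cost of inserting $\mathcal Y(\tau)$ and $|\mu|_{\mathfrak h}^{-2}$, producing respectively the $\kappa\log^{+}\mathcal Y(r^{2})$ and $m_{0}(\mathfrak L;r,s)$ terms via~(\ref{1.6}). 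A standard calculus (Borel) lemma converts the resulting averaged estimates into pointwise bounds at $r$ outside an exceptional set, and is responsible for the double-logarithmic correction $\kappa\log^{+}(\cdots)$ in the statement.

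Finally, I would assemble the pieces. Combining the Cartan inequality, the First Main Theorem, the Poincar\'e--Lelong identity for $\bbf_{N}$, the Green--Jensen bound for $\log A_p$, and the $N-1$ logarithmic-derivative estimates (each with its combinatorial multiplicity) yields the leading term $(N+1)T_{f}(r,s)-N_{\mathrm{ram}\,f}(r,s)$, with the error terms assembling into the $\tfrac{N(N+1)}{2}$-multiple of the inner block $m_{0}(\mathfrak L;r,s)+\mathrm{Ric}_{p}(r,s)+\kappa\log^{+}T_{f}(r,s)$ and, after the second application of Borel's lemma, the outer block $\log^{+}m_{0}(\mathfrak L;r,s)+\log^{+}\mathcal Y(r^{2})+\log^{+}\mathrm{Ric}_{p}(r,s)+\log^{+}r$. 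This reproduces the inequality stated in Theorem~\ref{2.3}.
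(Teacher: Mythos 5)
A preliminary remark for calibration: the paper offers no proof of Theorem \ref{2.3} at all --- it is imported wholesale from Han \cite[Theorem 4.1]{Han} --- so your proposal must be measured against the Ahlfors--Weyl/Cartan argument of that cited source, whose ingredients you do name correctly: the associated maps $f_k$ from $(\mathcal{A}_1)$, the auxiliary functions $\Psi_k$, the Green--Jensen formula, condition $(\mathcal{A}_2)$ through (\ref{1.4}), and a Borel-type calculus lemma. The gap is in how you wire these together, and it is genuine. Your opening pointwise bound is false: writing $W$ for the $B$-Wronskian, linear independence gives $W\bigl(L_{j_0}(\bbf),\ldots,L_{j_N}(\bbf)\bigr)=c_K\,\bbf_N$ with $c_K\neq 0$, and the correct identity is
\[
\sum_{j\in K}\lambda_{H_j}(f)=\log\frac{\|\bbf\|^{N+1}}{|\bbf_N|}+\log\frac{\bigl|W\bigl(L_{j_0}(\bbf),\ldots,L_{j_N}(\bbf)\bigr)\bigr|}{\prod_{j\in K}\bigl|L_j(\bbf)\bigr|}+O(1),
\]
in which the middle term --- a determinant in the logarithmic $B$-derivatives $(L_j(\bbf))^{(k)}/L_j(\bbf)$ --- is unbounded and cannot be absorbed into $O(1)$. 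That quotient is precisely the object the logarithmic-derivative machinery must control, and your sketch never estimates it.

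The error then propagates into your second step. Since $\bbf$ is reduced and $\bbf_N$ is a scalar whose divisor is the ramification divisor, Green--Jensen and Poincar\'e--Lelong give
\[
\int_{M\langle r\rangle}\log\frac{\|\bbf\|^{N+1}}{|\bbf_N|}\,\sigma=(N+1)T_f(r,s)-N_{ramf}(r,s)+O(1),
\]
so this globally defined quantity carries exactly the two main terms and produces no error block whatsoever; it is not the source of the $\Psi_k$ contributions, and your plan to decompose it as a linear combination of $\log\|\bbf\|$, $\log A_p$ and the $\log\Psi_k$ is in fact impossible: setting $u_k=\log\|\bbf_k\|$, one has $\log\Psi_k=2u_{k-1}-4u_k+2u_{k+1}-\log A_p$, and matching $(N+1)u_0-u_N$ would require $c_{j-1}-2c_j+c_{j+1}=0$ for all interior $j$ (with the conventions $c_0=c_N=0$), forcing $c\equiv 0$, while the coefficient of $u_N$ requires $c_{N-1}=-\tfrac12$. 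In Han's actual proof the $\Psi_k$ enter through the Pl\"ucker formulas relating $T_{f_{k-1}},T_{f_k},T_{f_{k+1}}$ --- equivalently, through estimating the Wronskian quotient displayed above --- and the weighted summation over $k=1,\ldots,N-1$ is what generates the factor $\frac{N(N+1)}{2}$ together with the $\log^+T_f(r,s)$ and $\log^+\mathcal{Y}(r^2)$ blocks after the Borel lemma. If your first display were true as stated, integration would immediately yield a second main theorem with no error term at all, which is false already for $M=\C$ with $p=m=1$. So while your toolbox coincides with that of the cited proof, the assembly proves too much at the first step and leaves the theorem's entire error structure unaccounted for.
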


\begin{lemma}[{see \cite[Lemma 2.5.2]{V87} and also \cite[Theorem 2.1(h)]{Sil87}}]\label{2.4} 
Let $Y$ be a closed subscheme of $V$, and let $\tilde{V}$ be the blowing-up of $V$ along $Y$ with exceptional divisor $E$. Then $\lambda_{Y}(\pi(P))=\lambda_{E}(P)+O(1)$ for $P \in \tilde{V} \setminus{\rm Supp} E$. 
\end{lemma}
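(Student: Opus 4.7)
The plan is to reduce the claim to a purely local computation on an affine open cover of $V$, and then upgrade local bounds to a global $O(1)$ via compactness of $\tilde V$ (which is projective, since $V$ is).

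First I would fix an affine Zariski open cover $\{U_\alpha\}$ of $V$ together with generators $f_1^\alpha,\ldots,f_{r_\alpha}^\alpha$ of $\Gamma(U_\alpha,\mathcal I_Y)$, so that by the very definition of the Weil function recalled in the introduction,
\[
\lambda_Y(x) = -\log\max_{1\le i\le r_\alpha}|f_i^\alpha(x)| + c_\alpha(x) \qquad (x\in U_\alpha\setminus\supp Y),
\]
with $c_\alpha$ continuous on $U_\alpha$. Pulling back along $\pi$, I would then invoke the universal property of the blow-up: $\pi^{-1}\mathcal I_Y\cdot\mathcal O_{\tilde V}=\mathcal O_{\tilde V}(-E)$ is an invertible ideal sheaf, so on a sufficiently fine open cover $\{\tilde U_\beta\}$ of $\pi^{-1}(U_\alpha)$ there exists a local defining section $s_\beta$ of $E$ such that
\[
\pi^\ast f_i^\alpha = u_{i,\beta}\cdot s_\beta \qquad (i=1,\ldots,r_\alpha),
\]
with $u_{i,\beta}\in\Gamma(\tilde U_\beta,\mathcal O_{\tilde V})$ having \emph{no common zero} on $\tilde U_\beta$.

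From this, $\max_i|\pi^\ast f_i^\alpha(P)|=|s_\beta(P)|\cdot\max_i|u_{i,\beta}(P)|$, and taking $-\log$ yields $\lambda_Y\circ\pi(P)=\lambda_E(P)+\varphi_\beta(P)$ on $\tilde U_\beta\setminus\supp E$, where $\varphi_\beta$ collects $-\log\max_i|u_{i,\beta}|$, the pullback $c_\alpha\circ\pi$, and the local contribution of the Hermitian metric used to define $\lambda_E$. The decisive point is that $\varphi_\beta$ is \emph{continuous} on all of $\tilde U_\beta$: the absence of a logarithmic singularity comes precisely from $\max_i|u_{i,\beta}|$ being nowhere zero. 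Finally, since $V$ is projective, $\tilde V$ is compact, so I would cover $\tilde V$ by finitely many such $\tilde U_\beta$; each $\varphi_\beta$ is then bounded on $\tilde U_\beta$, and patching gives $\lambda_Y\circ\pi=\lambda_E+O(1)$ on $\tilde V\setminus\supp E$.

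The step I expect to require the most care is the local algebraic claim that once the common factor $s_\beta$ has been extracted, the residual regular functions $(u_{1,\beta},\ldots,u_{r_\alpha,\beta})$ generate the unit ideal on $\tilde U_\beta$. This is exactly the content of the identity $\pi^{-1}\mathcal I_Y\cdot\mathcal O_{\tilde V}=\mathcal O_{\tilde V}(-E)$: if the $u_{i,\beta}$ had a common zero at some $P\in\tilde U_\beta$, the principal ideal $(s_\beta)$ would strictly contain the pulled-back ideal at $P$, contradicting the fact that the pullbacks $\pi^\ast f_i^\alpha$ already generate $\mathcal O_{\tilde V}(-E)$ locally. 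Once this is in hand, the remainder is bookkeeping across the finite atlas and the standard fact that continuous functions on a compact space are bounded.
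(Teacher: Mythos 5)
The paper does not actually prove Lemma \ref{2.4}: it is imported verbatim from Vojta \cite[Lemma 2.5.2]{V87} and Silverman \cite[Theorem 2.1(h)]{Sil87}, so there is no internal proof to compare against. Your argument is the standard proof of this fact and is essentially correct: the identity $\pi^{-1}\mathcal I_Y\cdot\mathcal O_{\tilde V}=\mathcal O_{\tilde V}(-E)$ together with the fact that $s_\beta$ is a nonzerodivisor does force $1=\sum_i a_i u_{i,\beta}$ locally, so the $u_{i,\beta}$ have no common zero and $-\log\max_i|u_{i,\beta}|$ is continuous on $\tilde U_\beta$; this is exactly the crux, and you have identified and justified it correctly. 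The only step stated too loosely is the final one: a continuous function on an open chart $\tilde U_\beta$ of a compact space need not be bounded on $\tilde U_\beta$ (think of $1/x$ on $(0,1)\subset[0,1]$), so ``each $\varphi_\beta$ is bounded on $\tilde U_\beta$'' does not follow from continuity alone. The correct conclusion of your local computation is that the single function $\lambda_Y\circ\pi-\lambda_E$, a priori defined only on $\tilde V\setminus\supp E$, extends continuously to each $\tilde U_\beta$ (the extensions agreeing on overlaps since they agree off the nowhere-dense set $\supp E$), hence to a continuous function on all of the compact space $\tilde V$, which is therefore bounded; alternatively, refine to a finite cover by relatively compact subsets. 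With that one-line repair your proof is complete and coincides with the argument in the cited sources.
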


\begin{lemma}[{see \cite[Lemma 5.4 .24]{Laz17}}]\label{2.5}
Let $V$ be a projective variety, $\mathcal{I}$ be a coherent ideal sheaf. Let $\pi: \tilde{V} \rightarrow V$ be the blowing-up of $\mathcal{I}$ with exceptional divisor $E$. Then there exists an integer $p_0=p_0(\mathcal{I})$ with the property that if $p \geq p_0$, then $\pi_{*} \mathcal{O}_{\tilde{V}}(-p E)=\mathcal{I}^{p}$, and moreover, for any divisor $D$ on $V$,
$$H^{i}\left(V, \mathcal{I}^{p}(D)\right)=H^{i}\left(\tilde{V}, \mathcal{O}_{\tilde{V}}\left(\pi^* D-p E\right)\right)$$
for all $i \geq 0$.
\end{lemma}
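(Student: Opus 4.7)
The plan is to deduce both assertions from the Proj description of the blow-up together with relative Serre vanishing and the Leray spectral sequence. First I would invoke the standard construction $\tilde V = \mathrm{Proj}_V\bigl(\bigoplus_{p\geq 0}\mathcal{I}^p\bigr)$, under which the ideal sheaf of the exceptional divisor is the relative Serre twist, i.e.\ $\mathcal{O}_{\tilde V}(-E)=\mathcal{O}_{\tilde V}(1)$. In particular $\mathcal{O}_{\tilde V}(-E)$ is $\pi$-ample and $\pi$ is a projective morphism.

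For the first assertion I would apply the general fact (EGA II; Hartshorne II.5.19) that for a finitely generated graded $\mathcal{O}_V$-algebra $\bigoplus\mathcal{A}_p$, the natural morphism $\mathcal{A}_p\to\pi_*\mathcal{O}_{\mathrm{Proj}}(p)$ is an isomorphism in all sufficiently large degrees. Applied to $\mathcal{A}_p=\mathcal{I}^p$ this yields an integer $p_1=p_1(\mathcal{I})$ such that $\pi_*\mathcal{O}_{\tilde V}(-pE)=\mathcal{I}^p$ for every $p\geq p_1$. For the higher direct images I would use relative Serre vanishing for the $\pi$-ample line bundle $\mathcal{O}_{\tilde V}(-E)$ on the projective morphism $\pi$, producing an integer $p_2=p_2(\mathcal{I})$ with
\begin{equation*}
R^i\pi_*\mathcal{O}_{\tilde V}(-pE)=0\quad\text{for every }i\geq 1,\ p\geq p_2.
\end{equation*}
Setting $p_0:=\max(p_1,p_2)$, the projection formula (legitimate because $\mathcal{O}_V(D)$ is locally free) then gives, for all $p\geq p_0$ and all $i\geq 0$,
\begin{equation*}
R^i\pi_*\mathcal{O}_{\tilde V}(\pi^*D-pE)=R^i\pi_*\mathcal{O}_{\tilde V}(-pE)\otimes\mathcal{O}_V(D),
\end{equation*}
which equals $\mathcal{I}^p(D)$ when $i=0$ and vanishes when $i\geq 1$.

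Finally I would run the Leray spectral sequence
\begin{equation*}
E_2^{i,j}=H^i\bigl(V,R^j\pi_*\mathcal{O}_{\tilde V}(\pi^*D-pE)\bigr)\ \Longrightarrow\ H^{i+j}\bigl(\tilde V,\mathcal{O}_{\tilde V}(\pi^*D-pE)\bigr).
\end{equation*}
Because the higher direct images vanish for $p\geq p_0$, the sequence collapses onto the row $j=0$, delivering the required isomorphisms $H^i\bigl(V,\mathcal{I}^p(D)\bigr)\cong H^i\bigl(\tilde V,\mathcal{O}_{\tilde V}(\pi^*D-pE)\bigr)$ for every $i\geq 0$.

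The delicate step I expect to require the most care is the first one: certifying that $\pi_*\mathcal{O}_{\tilde V}(-pE)$ equals exactly $\mathcal{I}^p$ (and not merely some saturation or integral closure of it) once $p$ is large. This is the place where one genuinely needs the ``large $p$'' hypothesis of the lemma, because in low degrees the graded algebra $\bigoplus\mathcal{I}^p$ may fail to coincide with $\bigoplus\pi_*\mathcal{O}_{\tilde V}(p)$; the point is that any two finitely generated graded $\mathcal{O}_V$-algebras with the same Proj agree in all sufficiently high degrees, which provides $p_1$. Once this is in hand, the remaining ingredients (projection formula, relative Serre vanishing, Leray) are formal, and the uniformity of the bound $p_0$ in $D$ is automatic since $D$ enters only after the direct image computations that depend solely on $\mathcal{I}$.
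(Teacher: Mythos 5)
The paper gives no proof of this lemma at all: it is imported verbatim from the cited reference (Lazarsfeld, \emph{Positivity in Algebraic Geometry I}, Lemma 5.4.24), and your argument is exactly the standard proof of that result — $\mathcal{O}_{\tilde V}(-E)=\mathcal{O}_{\tilde V}(1)$ is $\pi$-ample on $\tilde V=\mathrm{Proj}\bigl(\bigoplus_{p\ge 0}\mathcal{I}^p\bigr)$, Serre's asymptotic comparison gives $\pi_*\mathcal{O}_{\tilde V}(-pE)=\mathcal{I}^p$ for $p\gg 0$ (exactly, not just up to saturation, which is the delicate point you correctly isolate), relative Serre vanishing kills $R^i\pi_*$ for $i\ge 1$, and the projection formula together with the collapsed Leray spectral sequence yields the cohomological identification. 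One small caution about your closing gloss: it is \emph{not} true that any two finitely generated graded $\mathcal{O}_V$-algebras with the same Proj agree in all sufficiently high degrees (a Veronese subalgebra is a counterexample); the statement you actually invoke and need — that for a graded algebra $S$ generated in degree one over a Noetherian base, the natural map $S_p\to\pi_*\mathcal{O}_{\mathrm{Proj}\,S}(p)$ is an isomorphism for $p\gg 0$ — is the correct one, so the proof stands as written.
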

%

\section{Proof of main theorem}
In order to prove the main theorem with closed subschemes, we first prove the general form of second main theorem for arbitrary families of hypersurfaces.
\begin{theorem}\label{3.1}
Let $V \subset \P^N(\C)$ be a smooth complex projective variety of dimension $n \geq 1$, $(M,\tau,\omega)$ be a generalized $p$-Parabolic manifold which satisfies the conditions $(\mathcal{A}_{1})$ and $(\mathcal{A}_{2})$. Let $f: M \rightarrow V$ be an algebraically non-degenerate meromorphic mapping. Let $Q_{1}, \ldots, Q_{q}$ be $q$ hypersurfaces in $\P^N(\C)$. Let $\delta\ge 1$ be a positive integer. Then, for any $\epsilon>0$ and $r>s>0$, we have
\begin{align*}
\big\| \int_{M\langle r\rangle}\max _{K \subset \mathcal{K}}&\sum_{j \in K}\frac{1}{\deg Q_j} \lambda_{Q_{j}}(f) \sigma\le(\delta(n+1)+\epsilon)T_{f}(r,s) \\
&+c\left(m_0(\mathfrak{L}; r, s)+{\rm Ric}_{p}(r,s)+\kappa \log^+\mathcal{Y}\left(r^{2}\right)+k \log^+r\right),
\end{align*}
where $\mathcal{K}$ is the set of all subsets $K \subset\{1, \ldots, q\}$, such that the family of hypersurfaces $\left\{Q_{j}, j \in K\right\}$ has distributive constant with respect to $V$ not exceed $\delta$.
\end{theorem}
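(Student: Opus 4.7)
My approach is to adapt Ru's Chow-form technique to the generalized $p$-Parabolic setting, with the distributive constant playing the role of the usual general-position condition.

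First, by raising each $Q_j$ to the appropriate power, one may assume all the hypersurfaces share a common degree $d$, since $\frac{1}{\deg Q_j}\lambda_{Q_j}$ is invariant under this operation and the distributive constant of a family is unchanged by replacing members with positive powers. Passing to the $d$-uple Veronese embedding $\phi_d : \P^N(\C) \to \P^{n_d}(\C)$ with $n_d = \binom{N+d}{d}-1$, each $Q_j$ corresponds to a hyperplane $\tilde{Q}_j$ on $\tilde{V} = \phi_d(V)$, and $\tilde{f} = \phi_d \circ f$ satisfies $\lambda_{Q_j}(f) = \lambda_{\tilde{Q}_j}(\tilde{f})$, while $T_{\tilde{f}}(r,s) = d\,T_f(r,s)$.

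Next, I proceed pointwise. For $z \in M$ outside a pluripolar set, I pick $K \in \mathcal{K}$ realising the maximum in the integrand at $z$. After a linear coordinate change on $\P^{n_d}(\C)$, which only perturbs Weil functions by bounded terms, the hyperplanes $\tilde{Q}_j$, $j \in K$, are placed among the coordinate hyperplanes; I then set $c_j := \lambda_{\tilde{Q}_j}(\tilde{f})(z)$ for $j \in K$ (and $c_j := 0$ otherwise). Since the distributive constant of $\{\tilde{Q}_j\}_{j \in K}$ with respect to $\tilde{V}$ is at most $\delta$, Theorem \ref{2.2} yields
$$e_{\tilde{V}}({\bf c}) \ge \frac{\Delta_{\tilde{V}}}{\delta}\sum_{j \in K} c_j,$$
and Theorem \ref{2.1} converts this into the Hilbert-weight inequality
$$\frac{S_{\tilde{V}}(u,{\bf c})}{u H_{\tilde{V}}(u)} \ge \frac{1}{\delta(n+1)}\sum_{j\in K} c_j - \frac{(2n+1)\Delta_{\tilde{V}}}{u}\max_j c_j,$$
valid for any integer $u > \Delta_{\tilde{V}}$.

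I would then realise $S_{\tilde{V}}(u,{\bf c})$ via an explicit basis of monomials ${\bf y}^{{\bf a}_1},\ldots,{\bf y}^{{\bf a}_{H_{\tilde{V}}(u)}}$ on $\P^{n_d}(\C)$ whose residues modulo $I_{\tilde{V}}$ form a basis of $\C[y]_u/(I_{\tilde V})_u$ and achieve the Hilbert weight at $z$. These monomials define a rational linear embedding $\psi_z$ of $\tilde{V}$ into $\P^{H_{\tilde{V}}(u)-1}(\C)$, and the composed map $F_z := \psi_z \circ \tilde{f}$ is linearly non-degenerate on its image because $f$ is algebraically non-degenerate on $V$. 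Since the combinatorial data (the set $K$ together with the ordering of the $c_j$'s) takes only finitely many values, $M$ is partitioned into finitely many measurable pieces on each of which a single choice of basis is used. Applying Theorem \ref{2.3} on each piece to $F_z$ with the coordinate hyperplanes of $\P^{H_{\tilde{V}}(u)-1}(\C)$, summing over pieces, and integrating against $\sigma$ over $M\langle r\rangle$, one obtains the inequality of the statement: choosing $u$ large enough that the $O(1/u)$ Hilbert-weight error is absorbed into $\epsilon T_f(r,s)$, the main contribution becomes $(\delta(n+1)+\epsilon)T_f(r,s)$, while the error terms produced by Theorem \ref{2.3} assemble into $m_0(\mathfrak{L};r,s) + \mathrm{Ric}_p(r,s) + \kappa\log^+\mathcal{Y}(r^2) + \kappa\log^+r$ up to a multiplicative constant, and the non-negative ramification term $N_{\mathrm{ram}F_z}$ may be discarded.

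The main obstacle I anticipate is the globalisation in the third step: turning finitely many local Hilbert-weight estimates, each with its own coordinate system and monomial basis, into a single application of Theorem \ref{2.3} with a uniform error of the stated shape. This requires careful combinatorial bookkeeping to enumerate the finitely many types of $(K, \text{ordering of } c_j)$, together with a verification that the bounded error from the linear coordinate change used to diagonalise the $\tilde{Q}_j$ does not inflate the principal coefficient $\delta(n+1)$.
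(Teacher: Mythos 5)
Your overall strategy is the one the paper follows: reduce to hyperplanes, apply the Evertse--Ferretti comparison of Chow and Hilbert weights (Theorem \ref{2.1}) together with the lower bound for the Chow weight in terms of the distributive constant (Theorem \ref{2.2}), and feed the resulting pointwise inequality into the hyperplane second main theorem on $p$-Parabolic manifolds (Theorem \ref{2.3}). Two implementation choices, however, differ from the paper, and the second one is a genuine gap as written. First, you linearize via the $d$-uple Veronese embedding, which forces you to move the hyperplanes $\tilde Q_j$ into coordinate position by a linear change depending on $K$; since Theorem \ref{2.2} and the Chow/Hilbert weights are stated relative to a fixed coordinate system, you would have to rerun the weight estimates in each of the finitely many coordinate systems and then merge the outputs. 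The paper sidesteps this entirely by using the morphism $\Phi=(Q_1:\cdots:Q_q):V\to\P^{q-1}(\C)$ (after arranging $V\cap\bigcap_{j}Q_j=\emptyset$), so that the coordinate hyperplanes of $\P^{q-1}(\C)$ restrict on $Y=\Phi(V)$ exactly to the $Q_j$ and the tuple $\mathbf{c}_z$ is simply the vector of Weil functions $\lambda_{Q_j}(f)(z)$.

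The genuine gap is the globalisation step. You propose to partition $M$ into finitely many measurable pieces according to the combinatorial type (the set $K$ together with the ordering of the $c_j$) and to apply Theorem \ref{2.3} ``on each piece''. Theorem \ref{2.3} is an inequality for the integral over the whole sphere $M\langle r\rangle$; its error terms ($T_F$, $N_{ramF}$, ${\rm Ric}_p(r,s)$, $m_0(\mathfrak L;r,s)$, \dots) are global quantities and the logarithmic-derivative-type estimates behind them do not localize to measurable subsets of $M\langle r\rangle$, so this step fails as stated. The correct resolution --- and the point of the ``$\max_K$'' built into the statement of Theorem \ref{2.3} --- is to collect all linear forms $L_{i,z}$ produced by all monomial bases and all $K$ into one finite set $\mathcal L$, form a single linearly non-degenerate map $F$ by composing $f$ with a fixed basis of $\C[y_1,\ldots,y_q]_u/(I_Y)_u$, and apply Theorem \ref{2.3} exactly once, keeping the maximum over linearly independent subsets $\mathcal J\subset\mathcal L$ inside the integrand. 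With that repair (and the absorption of the $O(1/u)$ Hilbert-weight error into $\epsilon T_f(r,s)$, which you handle correctly), your argument closes.
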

\begin{proof}
By adding more hypersurfaces into each set $\{Q_j;j\in K\}$ if necessary, without lose of generality we may suppose that $V\cap\bigcap_{j=1}^qQ_j=\emptyset$ and $\mathcal K$ be the set of all subset $K\subset\{1,\ldots,q\}$ such that $V\cap\bigcap_{j\in K}Q_j=\emptyset$ and the distributive constant of the family $\{Q_j\}_{j\in K}$ with respect to $V$ does not exceed $\delta$. Also, by replacing $Q_j$ with $Q_j^{d/\deg Q_j}$ if necessary, where $d={\rm lcm}(\deg Q_1,\ldots,\deg Q_q)$, we may assume that $\deg Q_j=d$ for every $j=1,\ldots,q$.

Consider the mapping $\Phi$ from $V$ into $\P^{q-1}(\C)$, which maps a point $x\in V$ into the point $\Phi(x)\in\P^{q-1}(\C)$ given by
$$\Phi(x)=(Q_1(x):\cdots:Q_{q}(x)).$$ 
Let $Y=\Phi (V)$. Since $V\cap\left (\bigcap_{j=1}^{q}Q_{j}\right )=\emptyset$, $\Phi$ is a finite morphism on $V$ and $Y$ is a complex projective subvariety of $\P^{q-1}(\C)$ with $\dim Y=n$ and $D:=\deg Y\le d^k.\deg V$. 
For every 
$${\bf a} = (a_{1},\ldots ,a_{q})\in\mathbb Z^q_{\ge 0}$$ 
and
$${\bf y} = (y_{1},\ldots ,y_{q})$$ 
we set ${\bf y}^{\bf a}:= y_{1}^{a_{1}}\ldots y_{q}^{a_{q}}$. Let $u$ be a positive integer and define:
\begin{align*}
n_u:=&H_Y(u)-1,\ l_u:=\binom{q+u-1}{u}-1;\\
Y_u:=&\C[y_1,\ldots,y_q]_u/(I_Y)_u.
\end{align*}
Then $Y_u$ is a vector space of dimension $n_u+1$. Fix a basis $\{v_0,\ldots, v_{n_u}\}$ of $Y_u$ and consider the meromorphic mapping $F$ with local reduced representations
$$\bbF=(v_0(\Phi\circ \bbf),\ldots ,v_{n_u}(\Phi\circ \bbf)):\C^p\rightarrow \C^{n_u+1},$$
where the maps $\bbf$ are local reduced representation of $f$. Hence $F$ is linearly non-degenerate, since $f$ is algebraically non-degenerate.

Fix a point $z\not\in\bigcup_{i=1}^q(Q_j(\bbf))^{-1}(0)$ for some local representation $\bbf$ of $f$ on an open neighborhood $U$ of $z$ in $M$. We define 
$${\bf c}_z:= (c_{1,z},\ldots,c_{q,z})\in\mathbb R^{q},$$ 
where
$$c_{j,z}:=\log\frac{\|\bbf(z)\|^d\|Q_{j}\|}{|Q_{j}(\bbf)(z)|}\ge 0\text{ for } j=1,...,q.$$
By the definition of the Hilbert weight, there are ${\bf a}_{1,z},...,{\bf a}_{H_Y(u),z}\in\mathbb Z^{q}_{\ge 0}$ with
$$ {\bf a}_{i,z}=(a_{i,1,z},\ldots, a_{i,q,z})\text{ with }a_{i,s,z}\in\{1,...,l_u\}, $$
 such that the residue classes modulo $(I_Y)_u$ of ${\bf y}^{{\bf a}_{1,z}},...,{\bf y}^{{\bf a}_{H_Y(u),z}}$ form a basis of the vector space $\C[y_1,...,y_q]_u/(I_Y)_u$ and
$$S_Y(u,{\bf c}_z)=\sum_{i=1}^{H_Y(u)}{\bf a}_{i,z}\cdot{\bf c}_z.$$
Since ${\bf y}^{{\bf a}_{i,z}}\in Y_u$ (modulo $(I_Y)_u$), we may write
$$ {\bf y}^{{\bf a}_{i,z}}=L_{i,z}(v_0,\ldots ,v_{H_Y(u)}), $$ 
where $L_{1,z},\ldots, L_{ H_Y(u),z}$ are linearly independent linear forms with coefficients in $\C$. Thus
\begin{align*}
\log\prod_{i=1}^{H_Y(u)} |L_{i,z}(\bbF(z))|&=\log\prod_{i=1}^{H_Y(u)}\prod_{1\le j\le q}|Q_{j}(\bbf(z))|^{a_{i,j,z}}\\
&=-S_Y(m,{\bf c}_z)+duH_Y(u)\log \|\bbf(z)\| +O(uH_Y(u)).
\end{align*}
It follows that
\begin{align*}
\log\prod_{i=1}^{H_Y(u)}\dfrac{\|\bbF(z)\|\cdot \|L_{i,z}\|}{|L_{i,z}(\bbF(z))|}=&S_Y(u,{\bf c}_z)-duH_Y(u)\log \|\bbf(z)\| \\
&+H_Y(u)\log \|\bbF(z)\|+O(uH_Y(u)).
\end{align*}
Note that $L_{i,z}$ depends on $i$, $z$ and $u$, but the number of these linear forms is finite. Denote by $\mathcal L$ the set of all $L_{i,z}$ occurring in the above inequalities. We have
\begin{align}\label{3.2}
\begin{split}
S_Y(u,{\bf c}_z)\le&\max_{\mathcal J\subset\mathcal L}\log\prod_{L\in \mathcal J}\dfrac{\|\bbF(z)\|\cdot \|L\|}{|L(\bbF(z))|}+duH_Y(u)\log \|\bbf(z)\|\\
& -H_Y(u)\log \|\bbF(z)\|+O(uH_Y(u)),
\end{split}
\end{align}
where the maximum is taken over all subsets $\mathcal J\subset\mathcal L$ with $\sharp\mathcal J=H_Y(u)$ and $\{L;L\in\mathcal J\}$ is linearly independent.
From Theorem \ref{2.1} we have
\begin{align}\label{3.3}
\dfrac{1}{uH_Y(u)}S_Y(u,{\bf c}_z)\ge&\frac{1}{(n+1)D}e_Y({\bf c}_z)-\frac{(2n+1)D}{u}\max_{1\le j\le q}c_{j,z}.
\end{align}
It is clear that
\begin{align*}
\max_{1\le j\le q}c_{j,z}\le \sum_{1\le j\le q}\log\frac{\|\bbf(z)\|^d\|Q_{j}\|}{|Q_{j}(\bbf)(z)|}+O(1),
\end{align*}
where $O(1)$ does not depend on $z$.
From (\ref{3.2}), (\ref{3.3}) and the above inequality, we have
\begin{align}
\label{3.4}
\begin{split}
\frac{1}{(n+1)D}e_Y({\bf c}_z)\le&\dfrac{1}{uH_Y(u)}\left (\max_{\mathcal J\subset\mathcal L}\log\prod_{L\in \mathcal J}\dfrac{\|\bbF(z)\|\cdot \|L\|}{|L(\bbF(z))|}-H_Y(u)\log \|\bbF(z)\|\right )\\
&+d\log \|\bbf(z)\|+\frac{(2n+1)D}{u}\max_{1\le j\le q}c_{j,z}+O(1)\\
\le &\dfrac{1}{uH_Y(u)}\left (\max_{\mathcal J\subset\mathcal L}\log\prod_{L\in\mathcal J}\dfrac{\|\bbF(z)\|\cdot \|L\|}{|L(\bbF(z))|}-H_Y(u)\log \|\bbF(z)\|\right )\\
&+d\log \|\bbf(z)\|+\frac{(2n+1)D}{u}\sum_{1\le j\le q}\log\frac{\|\bbf(z)\|^d\|Q_{j}\|}{|Q_{j}(\bbf)(z)|}+O(1).
\end{split}
\end{align}
Consider an element $K=\{j_0,\ldots,j_l\}\in\mathcal K$ with $1\le j_0<j_1<\cdots <j_l\le q$ and suppose
$$c_{j_0,z}\ge \cdots \ge c_{j_l,z}. $$
Let $m$ be the smallest integer such that $V\cap\bigcap_{i=0}^mQ_{j_i}=\emptyset$. Then by Theorem \ref{2.2}, we have
$$e_Y({\bf c}_z)\ge \frac{D}{\delta}\cdot(c_{j_0,z}+\cdots +c_{j_m,z})\ge \frac{D}{\delta}\cdot(c_{j_0,z}+\cdots +c_{j_l,z})+O(1),$$
where $O(1)$ does not depend on $z$. This implies that
\begin{align}\label{3.5}
e_Y({\bf c}_z)\ge \frac{D}{\delta}\cdot\max_{K\in\mathcal K}\log\prod_{j\in K}\frac{\|\bbf(z)\|^d\|Q_j\|}{|Q_j(\bbf)(z)|}+O(1).
\end{align}
Then, from (\ref{3.4}) and (\ref{3.5}), we have
\begin{align}\label{3.6}
\begin{split}
\frac{1}{\delta(n+1)}&\max_{K}\log\prod_{j\in K}\frac{\|\bbf(z)\|^d\|Q_j\|}{|Q_j(\bbf)(z)|}\\
&\le\dfrac{1}{uH_Y(u)}\left (\max_{\mathcal J\subset\mathcal L}\log\prod_{L\in\mathcal J}\dfrac{\|\bbF(z)\|\cdot \|L\|}{|L(\bbF(z))|}-H_Y(u)\log \|\bbF(z)\|\right )\\
&+d\log \|\bbf(z)\|+\frac{(2n+1)D}{u}\sum_{1\le j\le q}\log\frac{\|\bbf(z)\|^d\|Q_{j}\|}{|Q_{j}(\bbf)(z)|}+O(1),
\end{split}
\end{align}
where $O(1)$ does not depend on $z$. 

By Theorem \ref{2.3}, for any $\epsilon'>0, r>s>0$ large enough, we have
\begin{align}\label{3.7}
\begin{split}
&\| \int_{M\langle r\rangle} \max _{\mathcal{J} \subset \mathcal{L}} \log \prod_{L \in \mathcal{J}} \frac{\|\bbF\| \cdot\|L\|}{\|L(\bbF)\|} \sigma \le\left(H_{Y}(u)+\epsilon'\right) T_{F}(r,s)-N_{ramF}(r,s) \\
&+\left(\frac{H_{Y}(u)\left(H_{Y}(u)-1\right)}{2}+\epsilon'\right)\bigl[m_0(\mathfrak{L};r,s)+{\rm Ric}_{p}(r,s)\\
&+\kappa \log ^+ \mathcal{Y}\left(r^{2}\right)+\kappa \log ^+(r)\bigl]+O(1),
\end{split}
\end{align}
where the maximum is taken over all subsets $\mathcal{J} \subset \mathcal{L}$ so that $\{L: L \in \mathcal{J}\}$ are linearly independent.Taking integral (\ref{3.6}) and combining the result with (\ref{3.7}), we obtain
\begin{align}\label{3.8}
\begin{split}
&\| \int_{M\langle r\rangle} \max _{K \in\mathcal{K}} \sum_{j \in K} \lambda_{Q_{j}}(f) \sigma \le\delta(n+1) d T_{f}(r,s) \\
&+\frac{(2 n+1)D\delta(n+1)}{u} \sum_{j=1}^{q} m_{f}(r,Q_j)+\epsilon' \frac{\delta(n+1)}{H_{Y}(u) u} T_{F}(r,s) \\
&+\frac{\delta(n+1)}{u H_{Y}(u)}\left(\frac{H_{Y}(u)(H_{Y}(u)-1)}{2}+\epsilon'\right)\bigl[m_0(\mathfrak{L};r,s)\\
& +{\rm Ric}_{p}(r,s)+\kappa \log ^+ \mathcal{Y}\left(r^{2}\right)+\kappa \log ^+(r)\bigl]+O(1).
\end{split}
\end{align}
Choose $u $ large enough such that
\begin{align}\label{3.9}
\frac{(2n+1)D\delta(n+1)}{u}dq+\epsilon' \frac{\delta(n+1)}{H_{Y}(u) u}du\le\frac{\epsilon}{2}.
\end{align}
Since $T_{F}(r,s)=d u T_{f}(r,s)$ and $\sum_{j=1}^{q} m_{f}(r, D_{j})\le dq T_{f}(r,s)+O(1)$,
from (\ref{3.8}) and (\ref{3.9}), we have
\begin{align*}
&\| \int_{M\langle r\rangle} \max _{K \in\mathcal K} \sum_{j \in K} \lambda_{D_{j}}(f) \sigma \le(\delta(n+1)+\epsilon) d T_{f}(r,s) \\
&+\frac{\delta(n+1)}{u_0 H_{Y}(u)}\left(\frac{H_{Y}(u)\left(H_{Y}(u)-1\right)}{2}+\epsilon'\right)\bigl[m_0(\mathfrak{L};r,s) \\
& +{\rm Ric}_{p}(r,s)+\kappa \log ^+ \mathcal{Y}\left(r^{2}\right)+\kappa \log ^+(r)\bigl] .
\end{align*}
The theorem is proved.
\end{proof}

We may restate Theorem \ref{3.1} as the following. 
\begin{theorem}\label{3.10}
Let $V$ be a complex projective variety of dimension $n \geq 1$ defined over $\C$. Let $f$ be an algebraically non-degenerate meromorphic mapping from a generalized $p$-Parabolic manifold $(M,\tau,\omega)$, which satisfies the conditions $(\mathcal{A}_{1})$ and $(\mathcal{A}_{2})$, into $V$. Suppose that there exists an ample Cartier divisor $A$ on $V$. Let $D_{1}, \ldots, D_{q}$ be effective Cartier arbitrary divisors in $V$ satisfying that $D_j\sim d_jA\ (1\le j\le q)$, where $d_1,\ldots,d_q$ are positive integers. Then, for any $\delta\ge 1$, $\epsilon>0$ and $r>s>0$, we have
\begin{align*}
\| \int_{M\langle r\rangle}&\max _{K \subset \mathcal{K}} \sum_{j \in K}\frac{1}{d_j}\lambda_{D_{j}}(f) \sigma \le(\delta_V(n+1)+\epsilon) T_{f,A}(r,s)\\
&+c\left(m_0(\mathfrak{L};r,s)+{\rm Ric}_{p}(r,s)+\kappa \log ^+ \mathcal{Y}\left(r^{2}\right)+k \log ^+ r\right),
\end{align*}
where $\mathcal{K}$ is the set of all subsets $K \subset\{1, \ldots, q\}$ so that the distributive constant of the family $\{Y_j;j\in K\}$ with respect to $V$ does not exceed $\delta$ and $c \ge 1$ is a positive constant.
\end{theorem}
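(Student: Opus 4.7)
The strategy is to deduce Theorem~\ref{3.10} from the hypersurface version Theorem~\ref{3.1} by embedding $V$ into a projective space through a sufficiently large multiple of the ample divisor $A$, and then tracking how all relevant quantities transform under that embedding.

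First I would fix an integer $m\ge 1$ large enough that $mA$ is very ample and, for every $j\in\{1,\ldots,q\}$, the restriction map $H^0(\P^N(\C),\O(d_j))\to H^0(V,\O(d_j\cdot mA))$ is surjective; both conditions are guaranteed by Serre vanishing for $m\gg 0$. Let $\phi_m\colon V\hookrightarrow\P^N(\C)$ denote the resulting closed embedding and put $\tilde V:=\phi_m(V)$. Since $D_j\sim d_j A$ we have $mD_j\in|d_j\cdot mA|$, so the surjectivity produces a hypersurface $Q_j\subset\P^N(\C)$ of degree $d_j$ whose pullback $\phi_m^{*}Q_j$ equals $mD_j$ as Cartier divisors on $V$. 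Set $\tilde f:=\phi_m\circ f\colon M\to\tilde V$; because $\phi_m$ is a closed embedding, algebraic non-degeneracy of $f$ in $V$ transfers directly to algebraic non-degeneracy of $\tilde f$ in $\tilde V$.

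Next I would use the functoriality of Weil functions under morphisms to obtain
$\lambda_{Q_j}(\tilde f(z))=\lambda_{mD_j}(f(z))+O(1)=m\,\lambda_{D_j}(f(z))+O(1),$
together with the standard identity
$T_{\tilde f}(r,s)=T_{f,mA}(r,s)+O(1)=mT_{f,A}(r,s)+O(1).$
Because $\phi_m\colon V\to\tilde V$ is an isomorphism, $\dim\bigl(V\cap\bigcap_{j\in J}D_j\bigr)=\dim\bigl(\tilde V\cap\bigcap_{j\in J}Q_j\bigr)$ for every $J\subset\{1,\ldots,q\}$, so the distributive constants computed inside $V$ (using the $D_j$'s) and inside $\tilde V$ (using the $Q_j$'s) coincide, and the index set $\mathcal{K}$ appearing in Theorem~\ref{3.10} matches the one Theorem~\ref{3.1} produces for $\{Q_1,\ldots,Q_q\}$. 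Applying Theorem~\ref{3.1} to $\tilde f$ with the hypersurfaces $Q_1,\ldots,Q_q$ (noting $\deg Q_j=d_j$) and substituting the identifications above yields
\begin{align*}
\| \int_{M\langle r\rangle}\max_{K\in\mathcal{K}}\sum_{j\in K}\frac{m}{d_j}\lambda_{D_j}(f)\,\sigma
&\le (\delta(n+1)+\epsilon)\,mT_{f,A}(r,s)\\
&\quad +c'\bigl(m_0(\mathfrak{L};r,s)+{\rm Ric}_p(r,s)+\kappa\log^+\mathcal{Y}(r^2)+\kappa\log^+ r\bigr)+O(1).
\end{align*}
Dividing through by $m$ and absorbing the extraneous factor $1/m$ and the $O(1)$ into a new constant $c\ge 1$ gives precisely the estimate of Theorem~\ref{3.10}.

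The only substantive work is the bookkeeping described above: confirming that the Weil functions, characteristic functions, and distributive constants all transform correctly under $\phi_m$, and invoking Serre vanishing to realize each $mD_j$ as the pullback of a genuine hypersurface in $\P^N(\C)$. No additional Nevanlinna-theoretic ingredient is required, since Theorem~\ref{3.10} is essentially a coordinate-free repackaging of Theorem~\ref{3.1}.
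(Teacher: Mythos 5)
Your proposal is correct and supplies exactly the argument the paper leaves implicit: the paper offers no proof of Theorem~\ref{3.10} at all, presenting it merely as a ``restatement'' of Theorem~\ref{3.1}, and the intended justification is precisely your route --- embed $V$ by $|mA|$ for $m\gg 0$, realize each $mD_j$ as a degree-$d_j$ hypersurface section $Q_j\cap V$, and transfer Weil functions ($\lambda_{Q_j}(\tilde f)=m\lambda_{D_j}(f)+O(1)$), characteristic functions ($T_{\tilde f}=mT_{f,A}+O(1)$) and distributive constants before dividing by $m$. The only loose ends are minor and largely shared with the paper itself: Theorem~\ref{3.1} is stated for \emph{smooth} $V\subset\P^N(\C)$ while Theorem~\ref{3.10} drops smoothness, and your appeal to ``Serre vanishing'' for the surjectivity of $H^0(\P^N(\C),\O(d_j))\to H^0(V,\O_V(d_jmA))$ is more precisely the projective normality (surjectivity of $\mathrm{Sym}^{d_j}H^0(V,mA)\to H^0(V,d_jmA)$) of the embedding by $|mA|$ for $m\gg 0$, which does hold and can be arranged simultaneously for the finitely many $d_j$.
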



\begin{proof}[Proof of Theorem \ref{1.8}]
Denote by $\mathcal{I}_{i}$ the ideal sheaf of $Y_{i}$. Let $\pi_{i}: \tilde V_i\rightarrow V$ be the blowing-up of $V$ along $Y_{i}$ and $E_{i}$ be the exceptional divisor on $\tilde V_i$. Fix the real number $\epsilon>0$. Choose a rational number $\gamma>0$ such that
$$\delta(n+1+\gamma)(1+\gamma)<\epsilon.$$
Then for a small enough positive rational number $\gamma'$ depending on $\gamma, \gamma\pi^* A-\gamma'E_{i}$ is $\mathbb{Q}$-ample on $\tilde V_i$ for all $i \in\{1, \ldots, q\}$. By the definition of Seshadri constant, there exists a rational number $\epsilon_{i}>0$ such that $\epsilon_{i}+\gamma' \geq \epsilon_{Y_{i}}(A)$ and $\pi_{i}^* A-\epsilon_{i} E_{i}$ is $\mathbb{Q}$-nef on $\tilde V_i$ for all $0 \le i \le l$. With such choices, we have $(1+\gamma) \pi_{i}^* A-\left(\epsilon_{i}+\gamma'\right) E_{i}$ is an ample $\mathbb{Q}$-divisor on $\tilde V_i$ for all $i$. Let $N$ be an integer large enough such that $N(1+\gamma) \pi_{i}^* A$ and $N\left[(1+\gamma) \pi_{i}^* A-\left(\epsilon_{i}+\gamma'\right) E_{i}\right]$ are very ample integral divisors on $\tilde V_i$ for all $0 \le i \le q$.

For each subset $K\in\mathcal K$, set $q_K:=\sharp K$ and write $\{Y_j;j\in K\}=\{Y_{0,K},\ldots,Y_{q_K,K}\}$. We also write $\pi_{j,K}=\pi_{i}, \tilde{V}_{i,K}=\tilde V_i$ and $E_{i,K}=E_i$ if $Y_i=Y_{j,K}$. We claim that there are divisors $F_{0,K}, \ldots, F_{q_K,K}$ on $V$ such that:
\begin{itemize}
\item[(a)] $N(1+\epsilon) A \sim F_{i,K}$ and $\pi_{i}^* F_{i,K} \geq N\left(\epsilon_{i}+\gamma'\right) E_{i,K}$ on $\tilde V_{i,K}$ for all $i=0, \ldots, q_K$;
\item[(b)] The family of divisors $\{F_{0,K}, \ldots, F_{q,K}\}$ has the distributive constant with respect to $V$ not exceed $\delta$.
\end{itemize}
We construct these divisors by induction as follows. Assume $F_{0,K}, \ldots, F_{j-1,K}$ have been constructed so that the assertion (a) holds for all $i=1,\ldots,j-1$ and the distributive constant with respect to $V$ of the family $\{F_{0,K}, \ldots, F_{j-1,K}, Y_{j,K}, \ldots, Y_{q_K,K}\}$ does not exceed $\delta$. To find $F_{j,K}$, we let $\tilde F_{i,K}^{(j)}=\pi_{j,K}^* F_{i,K}, i=0, \ldots, j-1$, and $\tilde{Y}_{i,K}^{(j)}=\pi_{j,K}^* Y_{i,K}$ for $i=j+1, \ldots, q$. 

We choose $s_{j,K} \in$ $H^0\left(\tilde V_{j,K}, N(1+\gamma)\pi^*A-N\left(\epsilon_{j}+\gamma'\right)E_{j}\right)$ such that $s_{j,K}$ does not vanish entirely on any irreducible components of any $\left(\bigcap_{i \in I}\tilde F_{i,K}\cap\bigcap_{i \in J}\tilde Y_{i,K}\right)$ where not both $I$ and $J$ are empty. 
Put $\tilde F_{j,K}:={\rm div}\left(s_{i,K}\right)+N\left(\epsilon_{j}+\gamma'\right)\tilde Y_{j,K}$. By Lemma \ref{2.5}, we have, for $N$ big enough,
$$H^0\left(V, \mathcal{O}_{V}(N(1+\gamma) A) \otimes \mathcal{I}_{j}^{N\left(\epsilon_{j}+\gamma'\right)}\right)=H^0\left(\tilde V_{j,K}, \mathcal{O}_{\tilde V_{j}}\left(N\left((1+\gamma) \pi_{j,K}^* A-\left(\epsilon_{j}+\gamma'\right) E_{j,K}\right)\right)\right).$$
Then, there exists a divisor $F_{j,K}$ on $V$ such that $\tilde F_{j,K}=\pi^*F_{j,K}$. Consider an irreducible component with maximal dimension $\Gamma$ of $F_{j,K}\cap\bigcap_{i \in I}F_{i,K}\cap\bigcap_{i \in J} Y_{i,K}$. If $\Gamma\subset Y_{j,K}$ then we have
$$\dim\Gamma\le \dim Y_{j,K}\cap\bigcap_{i \in I}F_{i,K}\cap\bigcap_{i \in J} Y_{i,K}.$$
If $\Gamma\not\subset Y_{j,K}$ then $\pi^*\Gamma\not\subset{\rm div}\left(s_{i,K}\right)$, and hence
$$\dim\Gamma=\dim (\pi^*\Gamma\setminus E_{j,K})\le \dim(\bigcap_{i \in I}\tilde F_{i,K}\cap\bigcap_{i \in J}\tilde Y_{i,K}\setminus E_{j})-1\le \dim(\bigcap_{i \in I}F_{i,K}\cap\bigcap_{i \in J}Y_{i,K}\setminus Y_{j,K})-1,$$
(since $\pi_{j,K}$ is isomorphic outside $E_{j,K}$). Therefore, we have

\begin{align*}
\max\left\{1,\frac{1+\sharp I+\sharp J}{n-\dim\Gamma}\right\}\le\max&\biggl\{1; \frac{1+\sharp I+\sharp J}{n-\dim Y_{j,K}\cap\bigcap_{i \in I}F_{i}\cap\bigcap_{j \in J} Y_{j,K}};\\
&\frac{\sharp I+\sharp J}{n-\dim(\bigcap_{i \in I}F_{i,K}\cap\bigcap_{j \in J}Y_{j,K}\setminus Y_{j,K})}\biggl\}.
\end{align*}
Hence, the distributive constant of $\{F_{0,K}, \ldots, F_{j,K}, Y_{j+1,K}, \ldots, Y_{q_K,K}\}$ with respect to $V$ does not exceed that of $\{F_{0,K}, \ldots, F_{j-1,K}, Y_{j,K}, \ldots, Y_{q_K,K}\}$, in particular not exceed $\delta$. Also, by the construction, we have $\pi_{j,K}^* F_{j,K} \geq N\left(\epsilon_{j}+\gamma'\right) E_{j,K}$ on $\tilde{V}_{j,K}$. So the claim holds by induction.

Since $\pi_{j,K}^* F_{j,K} \geq N\left(\epsilon_{j}+\gamma'\right) E_{j,K}$ on $\tilde V_{i,K}$, by applying Lemma \ref{2.4}, for every point $P \in \tilde V_{j,K} \backslash {\rm Supp} E_{j,K}$, we have
\begin{align*}
N \epsilon_{Y_{j,K}}(A) \lambda_{Y_{j,K}}\left(\pi_{j,K}(P)\right) & \le N\left(\epsilon_{j}+\gamma'\right) \lambda_{Y_{j,K}}\left(\pi_{j,K}(P)\right)=N\left(\epsilon_{j}+\gamma'\right) \lambda_{E_{j,K}}(P) \\
& \le \lambda_{\pi_{j}^* F_{j,K}}(P)=\lambda_{F_{j,K}}\left(\pi_{j,K}(P)\right).
\end{align*}
Then, for every $x\not\in\bigcup_{j,K}f^{-1}(Y_{j,K})$, we have
$$\sum_{j=0}^{q_K}\epsilon_{Y_{j,K}}(A) \lambda_{Y_{j,K}}(f(x))\le \sum_{j=0}^{q_K} \lambda_{F_{j,K}}(f(x)).$$
It implies that
\begin{align}\label{3.11}
\| \int_{M\langle r\rangle}\max _{K \in\mathcal{K}}\sum_{j \in K}\epsilon_{Y_j}(A) \lambda_{Y_j}(f(x))\lambda_{Y_{j}}(f)\sigma\le \| \int_{M\langle r\rangle}\max _{K\in\mathcal{K}}\sum_{j \in K}\lambda_{F_{j,K}}(f)\sigma.
\end{align}
Also, applying Theorem \ref{3.10}, for any $r>s>0$, we have
\begin{align}\label{3.12}
\begin{split}
\| \int_{M\langle r\rangle}&\max _{K \subset \mathcal{K}}\lambda_{F_{j,K}}(f)\sigma \le(\delta(n+1)+\epsilon)T_{f,A}(r,s)\\
&+c\left(m_0(\mathfrak{L};r,s)+{\rm Ric}_{p}(r,s)+\kappa \log ^+ \mathcal{Y}\left(r^{2}\right)+k \log ^+ r\right),
\end{split}
\end{align}
where $c\ge 1$ is a positive constant. Then, the theorem follows from (\ref{3.11}) and (\ref{3.12}). 
\end{proof}

From the proof of Theorem \ref{3.1} and using the technique of \cite{Qmm} on estimate the truncation level, we get the following truncated second main theorem for hypersurfaces.
\begin{corollary}\label{3.13}
Let $V \subset \P^N(\C)$ be a smooth complex projective variety of dimension $n \geq 1$ and $(M,\tau,\omega)$ be an $p$-Parabolic manifold. Let $f: M \rightarrow V$ be an algebraically nondegenerate meromorphic mapping which satisfies the conditions $(\mathcal{A}_{1})$ and $(\mathcal{A}_{2})$. Let $\{Q_{1}, \ldots, Q_{q}\}$ be a family of $q$ hypersurfaces of $\P^N(\C)$ with the distributive constant with respect to $V$ equal to $\delta$. Let $d={\rm lcm}(\deg Q_1,\ldots,\deg Q_q)$. Then, for any $\epsilon>0$ and $r>s>0$, we have
\begin{align*}
\|(q-\delta(n+1)-\epsilon) T_{f}(r,s) \le&\sum_{j=1}^{q}\frac{1}{\deg Q_j}N_{f}^{[m]}(r,s;Q_{j}) \\
&+c\left(m_0(\mathfrak{L};r, s)+{\rm Ric}_{p}(r,s)+\kappa\log^+ \mathcal{Y}\left(r^{2}\right)+\kappa\log^+r\right),
\end{align*}
where $c\le\delta_Ve^n(2n+4)^{n-1}q^nd^{n^2+n-1}\deg(V)^n\epsilon^{-n+1}$ is a positive constant and 

$$m=\left[d^{n^2+2n}\deg (V)^{n+1}e^n(2n+5)^n(n+1)^nq^n\epsilon^{-n}\right].$$ 
\end{corollary}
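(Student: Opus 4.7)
The plan is to revisit the proof of Theorem~\ref{3.1} and, instead of absorbing the ramification contribution $-N_{ramF}(r,s)$ coming from Theorem~\ref{2.3} into the lower-order error as was done there, to retain it and translate it into truncated counting functions for the hypersurfaces $Q_j$. The conversion will combine the classical Wronskian inequality on the $F$-side (which relates the ramification of $F$ to the gap between non-truncated and $n_u$-truncated counting functions) with a simple pointwise monomial bound that transfers the truncation from the linear forms $L$ evaluated at $\bbF$ to the polynomials $Q_j$ evaluated at $\bbf$. This will produce the explicit truncation level $m$.

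I keep the reductions of the proof of Theorem~\ref{3.1}: assume $\deg Q_j=d$ for all $j$; form the finite morphism $\Phi:V\to Y\subset\P^{q-1}(\C)$ with $\Phi(x)=(Q_1(x):\cdots:Q_q(x))$; pick a basis $\{v_0,\ldots,v_{n_u}\}$ of $Y_u=\C[y_1,\ldots,y_q]_u/(I_Y)_u$, where $n_u=H_Y(u)-1$; and form the linearly non-degenerate map $F=[v_0(\Phi\circ\bbf):\cdots:v_{n_u}(\Phi\circ\bbf)]:M\to\P^{n_u}(\C)$. Applying Theorem~\ref{2.3} to $F$ with the set $\mathcal L$ of linear forms produced in the Chow-weight analysis and retaining $-N_{ramF}(r,s)$, then invoking the First Main Theorem for $F$ together with the Wronskian inequality
$$\sum_{L\in\mathcal J}\bigl(N_F(r,s;L)-N_F^{[n_u]}(r,s;L)\bigr)\le N_{ramF}(r,s),$$
yields the truncated bound
$$\sum_{L\in\mathcal J}N_F^{[n_u]}(r,s;L)\ge(\sharp\mathcal J-n_u-1)T_F(r,s)-(\text{small terms}).$$

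For every $L\in\mathcal L$ the construction gives $L(\bbF)=\prod_{j=1}^q Q_j(\bbf)^{a_j}$ with $a_j\in\mathbb Z_{\ge 0}$ and $\sum_j a_j=u$, so the elementary pointwise inequality
$$\min\{n_u,\mathrm{ord}_z L(\bbF)\}\le\sum_{j=1}^q a_j\min\{n_u,\mathrm{ord}_z Q_j(\bbf)\}\le u\sum_{j=1}^q\min\{n_u,\mathrm{ord}_z Q_j(\bbf)\}$$
translates into $N_F^{[n_u]}(r,s;L)\le u\sum_{j=1}^q N_f^{[n_u]}(r,s;Q_j)$. Integrating the pointwise inequality~(\ref{3.6}) at $K=\{1,\ldots,q\}$ (which lies in $\mathcal K$ since $\delta$ is the distributive constant of the whole family), using $T_F(r,s)=du\,T_f(r,s)+O(1)$, and applying the First Main Theorem to each $Q_j$ to convert proximity into counting functions, gives the truncated second main theorem with truncation level $m=n_u=H_Y(u)-1$. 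Taking $u$ proportional to $(2n+1)D\delta(n+1)dq\epsilon^{-1}$ (the analogue of~(\ref{3.9})) and using $D=\deg Y\le d^n\deg V$ together with the Hilbert-function bound $H_Y(u)\le D\binom{n+u}{n}$ yields the explicit majorizations of $m$ and $c$ stated in the corollary.

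The main obstacle is the monomial-to-linear translation above: one must distribute the outer $\min$ over $\sum a_j\mathrm{ord}_z Q_j(\bbf)$ losing only a factor of $u=\sum a_j$, so that the truncation level on the $f$-side remains $n_u$ and does not blow up to $un_u$. This is provided by the elementary inequality $\min\{a,\sum b_jx_j\}\le\sum b_j\min\{a,x_j\}$ for $a,b_j,x_j\ge 0$. The remainder is routine bookkeeping---choosing $u$ so that (\ref{3.9}) holds with an $\epsilon/2$-slack, and majorizing $H_Y(u)$ by a polynomial in $u$ with leading coefficient depending on $\deg Y$---to match the explicit constants in the statement.
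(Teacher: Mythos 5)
Your overall strategy --- keep the ramification term $-N_{ramF}(r,s)$ from Theorem \ref{2.3} and convert it into the gap between untruncated and truncated counting functions of the $Q_j$ --- is exactly the route the paper takes. But the conversion step, which is the entire content of the corollary, does not work as you describe. First, your intermediate bound $\sum_{L\in\mathcal J}N_F^{[n_u]}(r,s;L)\ge(\sharp\mathcal J-n_u-1)T_F(r,s)-(\text{small terms})$ is vacuous: the sets $\mathcal J$ occurring in the proof of Theorem \ref{3.1} are bases of $Y_u$, so $\sharp\mathcal J=H_Y(u)=n_u+1$ and the right-hand side is non-positive. Second, your monomial bound $N_F^{[n_u]}(r,s;L)\le u\sum_jN_f^{[n_u]}(r,s;Q_j)$ is a correct upper bound, but it points the wrong way for what is needed. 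What must be proved is a lower bound of the form $\frac{\delta(n+1)}{uH_Y(u)}N_{ramF}(r,s)\ge\sum_j\bigl[N_f(r,s;Q_j)-N_f^{[n_u]}(r,s;Q_j)\bigr]-\frac{\delta(2n+1)(n+1)D}{u}\sum_jN_f(r,s;Q_j)$, with coefficient exactly $1$ in front of the truncation gap. Feeding your two inequalities into each other gives at best $N_{ramF}\ge\sum_j\bigl(\sum_{L\in\mathcal J}a_{L,j}\bigr)N_f(r,s;Q_j)-uH_Y(u)\sum_jN_f^{[n_u]}(r,s;Q_j)$, which (a) requires the unproven lower bound $\sum_{L\in\mathcal J}a_{L,j}\ge uH_Y(u)/(\delta(n+1))$ on the total exponent of each $Q_j$ in the chosen monomial basis, and (b) even granting that, leaves a factor $\delta(n+1)$ in front of $\sum_jN_f^{[n_u]}(r,s;Q_j)$, i.e.\ a strictly weaker conclusion than the one stated.

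The missing ingredient is a divisor-level rerun of the Chow/Hilbert weight argument: at each point $z$ one must take ${\bf c}_z$ to be the vector of local vanishing orders $(\mathrm{ord}_zQ_1(\bbf),\ldots,\mathrm{ord}_zQ_q(\bbf))$, choose the monomial basis attaining $S_Y(u,{\bf c}_z)$ at that point, and apply Theorems \ref{2.1} and \ref{2.2} to this ${\bf c}_z$ together with the local Wronskian estimate $\mathrm{ord}_zW\ge\sum_i\max\{0,\mathrm{ord}_zL_{i,z}(\bbF)-n_u\}$. This is precisely inequality (4.14) of \cite{Qmm}, which the paper cites for this step; your elementary $\min$-subadditivity does not substitute for it. A minor further discrepancy: you take $u$ proportional to $\delta$, which would introduce a factor $\delta^n$ into the truncation level $m$; the paper's choice (\ref{3.16}) of $u$ is independent of $\delta$, which is what produces the stated value of $m$.
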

\noindent
Here, by $[x]$ we denote the biggest integer not exceed the real number $x$.
\begin{proof}[Sketch of the proof]
As usual argument, we may assume that all $Q_j\ (1\le j\le q)$ have the same degree $d$ and $V\cap\bigcap_{j=1}^q Q_j=\emptyset$. 
From (\ref{3.8}), we have
\begin{align}\label{3.14}
\begin{split}
&\|\ \sum_{j=1}^{q}m_{f}(r, Q_{j}) \le\delta(n+1) d T_{f}(r,s) \\
&-\frac{\delta(n+1)}{u H_{Y}(u)} N_{ramF}(r,s)+\frac{(2 n+1)D\delta(n+1)}{u}\sum_{j=1}^{q} m_{f}\left(r,Q_{j}\right) \\
&+\frac{\delta(n+1)}{u H_{Y}(u)}\left(\frac{H_{Y}(u)\left(H_{Y}(u)-1\right)}{2}+\epsilon'\right)\bigl [m_0(\mathfrak{L};r,s)+{\rm Ric}_{p}(r,s)\\
&+\kappa \log ^+ \mathcal{Y}\left(r^{2}\right)+\kappa \log ^+(r)\bigl]+\epsilon' \frac{\delta(n+1)}{H_{Y}(u) u}T_{F}(r,s)+O(1).
\end{split}
\end{align}
Similar as (4.14) in \cite{Qmm}, we have
\begin{align*}
\frac{\delta(n+1)}{duH_{Y}(u)}N_{ramF}(r,s)&\geq \frac{1}{d}\sum_{j=1}^{q}[N_{f}(r,s;Q_{j})-N_{f}^{[n_{u}]}(r,s;Q_{j})] \\
&-\frac{\delta(2 n+1)(n+1)D}{du} \sum_{j=1}^{q}N_{f}(r,s;Q_{j}) .
\end{align*}
From (\ref{3.14}), the above inequality and the first main theorem, we obtain
\begin{align}\label{3.15}
\begin{split}
&\| \quad d(q-\delta(n+1)) T_{f}(r,s) \le \sum_{j=1}^{q} N_{f}^{[n_{u}]}\left(r,s;Q_{j}\right) \\
&+\left(\epsilon' \frac{d\delta(n+1)}{H_{Y}(u)}+\frac{\delta(2 n+1)(n+1)qD}{u}\right)T_{f}(r,s) \\
&+\frac{\delta(n+1)}{uH_{Y}(u)}\left(\frac{H_{Y}(u)\left(H_{Y}(u)-1\right)}{2}+\epsilon'\right)\bigl[m_0(\mathcal{L};r,s)+{\rm Ric}_{p}(r,s)\\
&+\kappa \log ^+ \mathcal{Y}\left(r^{2}\right)+\kappa \log ^+(r)\bigl]+O(1).
\end{split}
\end{align}
We now choose $u$ the smallest integer such that
\begin{align}\label{3.16}
u>d(2n+1)(n+1)qd^n\deg(V)\epsilon^{-1}\ge d(2n+1)(n+1)qD\epsilon^{-1},
\end{align}
and take
$$\epsilon'=\dfrac{H_Y(u)}{d(n+1)}\left (\epsilon-\frac{d(2n+1)(n+1)qD}{u}\right)>0.$$
The number $n_u$ and the constant $c$ are estimated as follows:
\begin{align}\label{3.17}
\begin{split}
n_u&=H_Y(u)-1\le D\binom{n+u}{n}-1\le d^n\deg (V)e^{n}\left(1+\frac{u}{n}\right)^{n}-1\\
&<d^n\deg (V)e^n\left (d(2n+5)qd^n\deg(V)\epsilon^{-1}\right)^n-1\\
&\le \left[\deg (V)^{n+1}e^nd^{n^2+2n}(2n+5)^nq^n\epsilon^{-n}\right]=m;
\end{split}
\end{align}
\begin{align}\label{3.18}
\begin{split}
c&= \frac{\delta(n+1)}{u H_{Y}(u)}\left(\frac{H_{Y}(u)(H_{Y}(u)-1)}{2}+\epsilon'\right)\\
&\le \frac{\delta(n+1)H_{Y}(u)}{2u}=\frac{\delta\binom{n+u}{u}\epsilon}{2d(2n+1)}\\
&\le \delta e^n(2n+4)^{n-1}q^nd^{n^2+n-1}\deg(V)^n\epsilon^{-n+1}.
\end{split}
\end{align} 
Combining (\ref{3.15})-(\ref{3.18}), we obtain the desired inequality of the corollary.
\end{proof}

\section*{Acknowledgements} 
This research is funded by Vietnam National Foundation for Science and Technology Development (NAFOSTED) under grant number 101.02-2021.12.

\noindent
{\bf Disclosure statement.} No potential conflict of interest was reported by the author(s).

\end{document}